\documentclass{article}
\usepackage{amsmath, amsthm, amssymb, amsfonts}
\usepackage{mathrsfs}
\usepackage{thmtools}
\usepackage{graphicx}
\usepackage{geometry}
\usepackage{tikz-cd}
\usepackage[british]{babel}
\usepackage{import}
\usepackage{authblk}

\usepackage{hyperref}

\newtheorem{theorem}{Theorem}[section]
\newtheorem{definition}[theorem]{Definition}
\newtheorem{remark}[theorem]{Remark}
\newtheorem{corollary}[theorem]{Corollary}
\newtheorem{proposition}[theorem]{Proposition}

% theorem boxes
%\declaretheoremstyle[name=Theorem,]{thmsty}
%\declaretheorem[style=thmsty,numberwithin=section]{theorem}

%\declaretheoremstyle[name=Proposition,]{prosty}
%\declaretheorem[style=prosty,numberlike=theorem]{proposition}

%\declaretheoremstyle[name=Corollary,]{prosty}
%\declaretheorem[style=prosty,numberlike=theorem]{corollary}

%\declaretheoremstyle[name=Lemma,]{prcpsty}
%\declaretheorem[style=prcpsty,numberlike=theorem]{lemma}

%\declaretheoremstyle[name=Definition,]{thmsty}
%\declaretheorem[style=thmsty,numberlike=theorem]{definition}

%\declaretheoremstyle[name=Remark,]{thmsty}
%\declaretheorem[style=thmsty,numberlike=theorem]{remark}

% macros
\def\dd{\text{d}}

\def\l{\left}
\def\r{\right}

\def\a{\alpha}
\def\b{\beta}
\def\g{\gamma}

\def\vp{\varphi}

\def\ba{\mathbf{a}}
\def\bb{\mathbf{b}}

\title{Minimising Length of Closed Billiard Trajectories on Hyperbolic Polygons}
\author{John Parker\\Department of Mathematical Sciences,\\ Durham University,\\ South Road, Durham DH1 3LE, UK \and Manvendra Somvanshi\\IISER Mohali,\\Knowledge city, Sector 81, Manauli, \\PO Sahibzada Ajit Singh Nagar, Punjab 140306, India}
\date{\today}

\begin{document}

\maketitle
\abstract{
In a hyperbolic polygon any finite collection of closed billiard trajectories can be assigned an average length function. In this paper, we consider the average length of the collection of cyclically related closed billiard trajectories in even-sided right-angled polygons and the collection of reflectively related closed billiard trajectories in Lambert quadrilaterals with acute angle $\pi/k$. We show that in the former case the average length is minimised by the regular even-sided right-angled polygon, and in the latter case it is minimised by the Lambert quadrilateral with a reflective symmetry about its long axis. We use techniques from Teichm\"{u}ller theory to prove the main theorems. }
\tableofcontents
%%%%%%%%%%%%%%%%%%%%%%%%%%%%%%%%%%%%%%%%%%%%%%%%%%%%%%%%%%%%%%%%%%%%%%%%%%%%%%%%%%%%%%%%%%%%%%%%%
\section{Introduction}
Let $P\subset \mathbb{D}^2$ be a hyperbolic polygon. We will consider two cases, namely when $P$ has all angles equal and when $P$ is a Lambert quadrilateral. A billiard trajectory on $P$ is the path traced by a point satisfying the following rules: the point moves along geodesics in the polygon and whenever the geodesic path meets an edge of the polygon, it is reflected along a geodesic so that both geodesics make the same angle with that edge. A billiard path is said to be closed if the point returns back to its starting position with the same direction in finite time. In other words, a (closed) billiard trajectory is a (closed) piecewise geodesic $\g$ in $P$ with all vertices of $\g$ lying on the edges of $P$ such that $\g$ makes the same angle on either side with the edge at each vertex.\\

First, consider the case where all the angles of $P$ are equal. If one labels the sides of $P$ with $1,\cdots, k$ in the anticlockwise direction, then every billiard trajectory $\g$ can be assigned a code by noting the number $a_i$ assigned to the edges of $P$ which the trajectory $\g$ hits in order. We call the code $\ba = (a_0,\cdots, a_{n-1})$ associated with $\g$ the \textit{ billiard sequence} of $\g$. Unlike in Euclidean geometry, a billiard sequence uniquely determines the hyperbolic billiard trajectory $\g$. Two billiard trajectories are said to be \textit{cyclically related} in $P$ if there are two anticlockwise labels of $P$ in which both trajectories have the same billiard sequence.\\

Let $\g_{\ba, P} = \{\g_1,\cdots,\g_n\}$ be the collection of all billiard trajectories that are cyclically related to the billiard sequence $\ba$. A collection of closed billiard trajectories is said to \textit{fill} the polygon $P$ if its complement in $P$ is a union of topological discs each of whose whose boundary is made up of either (a) segments of geodesics of the billiard trajectories, (b) geodesic segments of the billiard trajectories and one edge of $P$ including both its vertices, or (c) geodesic segments of the billiard trajectories, two adjacent edges of $P$, and the common vertex of those edges but not containing the other vertices of either edge. In \cite{parker2018minimizing} the following useful result is proved.

\begin{proposition}[\cite{parker2018minimizing}]\label{prop:filling}
    Let $P_0$ be a regular hyperbolic $k$-gon and let $\rho$ denote the anti-clockwise rotation through angle $2\pi/k$ about the center of $P_0$. Let $\g_0$ be a closed billiard trajectory in $P_0$ and let $\g_i = \rho^i(\g_0)$ for $i = 1, \cdots,k-1$. Then $\bigcup_i \g_i$ fills $P_0$. 
\end{proposition}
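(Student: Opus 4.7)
The strategy is to view $\Gamma := \bigcup_{i=0}^{k-1}\gamma_i$ as a finite embedded planar graph inside $\overline{P_0}$ and to classify the complementary components according to the boundary types (a)--(c). The main tools are the $\rho$-invariance of $\Gamma$, the reflection law at reflection points, and the fact that no trajectory passes through a vertex of $P_0$.

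First, the topological setup. Since $\gamma_0$ is a closed billiard trajectory it has only finitely many reflection points on $\partial P_0$, and consequently each rotate $\gamma_i$ has the same property. Declaring these reflection points, together with any interior crossings of the various $\gamma_i$, as vertices, and the geodesic arcs between them as edges, makes $\Gamma$ into a finite $1$-complex embedded in $\overline{P_0}$. Because $\overline{P_0}$ is a topological closed disc and $\Gamma \cup \partial P_0$ is a finite planar graph in it, standard planarity implies that each connected component of $P_0 \setminus \Gamma$ is a topological open disc. By construction $\Gamma$ is invariant under $\rho$, so $\rho$ permutes the complementary components.

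Second, I would carry out the boundary classification via local analysis. At a reflection point $p$ interior to an edge $e$, the trajectory through $p$ meets $e$ transversely with equal angles on either side, so the two trajectory arcs emanating from $p$ partition a small half-neighborhood of $p$ in $P_0$ into three angular sectors, two of which abut $e$ on opposite sides of $p$; in particular, when $\partial U$ follows $\partial P_0$ up to $p$, the boundary must turn from $\partial P_0$ onto a trajectory. At a vertex $v$ of $P_0$, no trajectory can pass through $v$ (the reflection law would be undefined there), so a small wedge of $P_0$ at $v$ lies entirely in one complementary component, whose boundary at $v$ continues from one adjacent edge onto the other. Combining these local pictures, every maximal arc of $\partial U$ lying on $\partial P_0$ is of one of three types: (I) strictly interior to a single edge, both endpoints reflection points; (II) a pair of partial adjacent edges meeting at their common vertex, the two remaining endpoints being reflection points; or (III) an arc covering at least one entire edge of $P_0$ including both of its vertices. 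Type (II) gives a component of type (c), type (III) restricted to a single complete edge gives (b), and the absence of any $\partial P_0$-arc gives (a).

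The main obstacle is to rule out type (I) completely and to show that type (III) never involves more than one complete edge; this is where the full hypotheses (cyclic $\rho$-symmetry and $\gamma_0$ being a single closed trajectory) are essential. For type (I), the intended approach is: given a hypothetical boundary arc $[p,q] \subset e$ with $p,q$ reflection points, follow the trajectory entering $U$ at $p$ through its successive reflections and use the $\rho$-symmetry to locate a rotate of this trajectory that is forced to cross the open arc $(p,q)$, contradicting $(p,q) \cap \Gamma = \emptyset$. A similar symmetry-plus-tracing argument should rule out type (III) with multiple complete edges, since any such oversized region would have to accommodate a $\rho$-rotate of $\gamma_0$ cutting across its interior. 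Making this combinatorial-geometric step precise is the principal non-trivial ingredient; the remaining identification of the enumerated boundary types with the filling types (a)--(c) is then immediate from the local analysis.
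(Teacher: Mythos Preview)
The paper does not prove this proposition; it is quoted from \cite{parker2018minimizing} and used as a black box in Section~\ref{sec:5}. So there is no in-paper argument to compare yours against, and your plan has to stand on its own.

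Unfortunately the plan is aimed at the wrong target. You propose to \emph{exclude} your type (I) --- a region meeting $\partial P_0$ in a single sub-arc interior to one edge with reflection points at both ends --- and to \emph{match} the paper's case (b) with your type (III) restricted to a single complete edge. Both moves fail. A region of your type (III) would carry both polygon vertices of that edge on its boundary; since no billiard trajectory passes through a polygon vertex, at each such vertex the boundary is forced onto the adjacent edge, so ``one full edge plus only trajectory segments'' never actually closes up. Conversely, regions of your type (I) \emph{do} occur as soon as the cyclic family $\{\rho^i\gamma_0\}$ hits some edge at two or more points: the region abutting the sub-arc between two consecutive reflection points is exactly of this form. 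Hence the ``symmetry-plus-tracing'' argument you sketch to eliminate type (I) cannot possibly succeed. These type-(I) regions are harmless for the application --- their four copies glue in pairs across the edge to give discs on the billiard surface --- and are precisely what case (b) has to mean in the right-angled setting, despite the paper's somewhat loose wording.

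There is also a gap you do not flag at all. You classify each \emph{maximal} $\partial P_0$-arc of $\partial U$ and then jump to ``$U$ is of type (a)/(b)/(c)'', implicitly assuming $U$ meets $\partial P_0$ in at most one arc. That assumption is exactly the substantive content of the proposition: a region touching $\partial P_0$ in two or more disjoint arcs lifts, under the four-fold gluing, to a subsurface of Euler characteristic~$\le 0$, so ruling this out is essential and is where the hypothesis of full cyclic symmetry is genuinely used. The easy half --- that no single boundary arc contains two polygon vertices --- does follow immediately from your observation that every edge carries at least one reflection point of some $\rho^j\gamma_0$; but the one-arc condition needs a separate argument that your plan does not provide.
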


The average length $L_{\text{avg}}(\ba,P)$ of $\g_{\ba, P}$ is well defined as the average of the hyperbolic lengths of $\{\g_i\}$. The authors of \cite{castle2009billiards} conjecture that: for a given billiard sequence $\ba$ on an ideal $k-$gon the average length function $L_{\text{avg}}(\ba,P)$ is minimised by the regular ideal polygon, $P_0$. This conjecture was proved in \cite{parker2018minimizing}.  

\begin{theorem}[\cite{parker2018minimizing}]\label{thm:ideal}
    In anti-clockwise labeled ideal hyperbolic polygons with $k\geq 3$ sides, the average length function of any family of cyclically related closed billiard trajectories corresponding to a given billiard sequence is uniquely minimised by the regular polygon. 
\end{theorem}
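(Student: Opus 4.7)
The plan is to transfer the problem to a question about closed geodesic length functions on a hyperbolic surface and then invoke Kerckhoff's theorem on strict convexity of length functions along earthquake paths. First I would double the ideal polygon $P$ along its boundary to obtain a hyperbolic surface $\Sigma = DP$; when $P$ is an ideal $k$-gon, $\Sigma$ is a sphere with $k$ cusps equipped with the involution $\sigma$ that swaps the two copies. Each closed billiard trajectory $\g$ in $P$ unfolds, by reflecting across the edges it meets, to a closed geodesic $\tilde\g$ in $\Sigma$ whose length is a fixed positive multiple of that of $\g$.

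The deformation space of anticlockwise labeled ideal $k$-gons embeds into $\T(\Sigma)$ as the $\sigma$-invariant real slice, of real dimension $k-3$ inside the full Teichm\"uller space of dimension $2k-6$. Under this embedding the regular polygon $P_0$ is the unique point fixed by the cyclic group $G = \Z/k\Z$ acting by cyclic relabeling of the edges. Up to a multiplicative constant, the average length function
$$
L_{\text{avg}}(\ba,P) \;=\; \f{1}{n}\sum_{i=0}^{n-1}\ell_P(\g_i)
$$
agrees with the length function on $\T(\Sigma)$ of the multicurve $\sum_{i=0}^{n-1}\tilde\g_i$. Proposition~\ref{prop:filling} applied at $P_0$ shows that $\bigcup_i \g_i$ fills $P_0$; doubling then shows that $\sum_i\tilde\g_i$ fills $\Sigma$, and since filling is a topological property, the same multicurve fills $\Sigma$ for every $P$ in the deformation space.

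Kerckhoff's convexity theorem now yields that $L_{\text{avg}}$ is strictly convex along every earthquake path in $\T(\Sigma)$. By construction the trajectories $\g_0,\dots,\g_{n-1}$ form a $G$-orbit under cyclic relabeling, so $L_{\text{avg}}$ is $G$-invariant. Suppose the minimum of $L_{\text{avg}}$ on the real slice is attained at some $P_{\min}$. For any $g\in G$, the point $g\cdot P_{\min}$ is also a minimum; if $P_{\min}$ were not $G$-fixed, Thurston's earthquake theorem would connect $P_{\min}$ to $g\cdot P_{\min}$ by a unique earthquake, which by its uniqueness commutes with $\sigma$ and therefore stays in the real slice. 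Strict convexity along this earthquake, combined with equality of the two endpoint values, would force a strictly smaller value in the interior, contradicting minimality. Hence $P_{\min}$ is $G$-fixed, so $P_{\min} = P_0$; strict convexity also gives uniqueness.

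The hard part is the geometric bookkeeping behind the second step: precisely identifying the polygon deformation space with the $\sigma$-real slice of $\T(\Sigma)$, verifying that the unfolded trajectories $\tilde\g_i$ are \emph{simple} closed geodesics that genuinely form a filling multicurve on $\Sigma$ (so that Kerckhoff's theorem applies as stated), and checking that earthquake paths respect the real slice so that the convexity argument descends intact. One should also confirm that $L_{\text{avg}}$ is proper on the real slice, so that a minimum exists; this is again a consequence of the filling property, since any degenerating sequence of ideal polygons forces at least one of the lengths $\ell_\Sigma(\tilde\g_i)$ to blow up.
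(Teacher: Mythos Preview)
First, note that the paper does not itself prove Theorem~\ref{thm:ideal}; it is quoted from \cite{parker2018minimizing}. The paper does, however, outline the method and then carries it out in detail for the right-angled case (Theorem~\ref{thm:main}), so one can compare your proposal against that template.

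Your overall strategy---pass to a covering surface, lift the cyclic family to a filling system of closed geodesics, use convexity and properness of the associated length function, and exploit the $\Z/k\Z$-symmetry to pin the minimum at the regular polygon---is exactly the scheme of \cite{parker2018minimizing} and of this paper. Doubling the ideal $k$-gon to a $k$-punctured sphere is the natural construction in the ideal case.

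The substantive difference, and the place where your argument has a genuine gap, is the choice of convexity. You invoke Kerckhoff's strict convexity along \emph{earthquake paths} and then claim that the earthquake from $P_{\min}$ to $g\cdot P_{\min}$ stays in the $\sigma$-fixed real slice ``by its uniqueness.'' But $\sigma$ is orientation-reversing on $\Sigma$, so it conjugates \emph{left} earthquakes to \emph{right} earthquakes; uniqueness of the left earthquake between two $\sigma$-fixed points does not force that path to be $\sigma$-invariant, and in general it will leave the real slice. This step does not go through as written.

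The approach in \cite{parker2018minimizing}, mirrored here in Sections~\ref{sec:3}--\ref{sec:5}, avoids this by working with the Weil--Petersson metric rather than earthquakes. Wolpert's theorem gives strict convexity of $L_{\tilde\g}$ along WP geodesics, and the extended mapping class group---including the anti-holomorphic class of $\sigma$---acts by WP isometries, so the $\sigma$-fixed slice is WP-geodesically convex. Combined with Kerckhoff's properness for filling systems, one obtains a \emph{unique} global minimiser $x_{\min}\in T(\Sigma)$; since $L_{\tilde\g}$ is invariant under both $\sigma$ and the cyclic group, uniqueness alone forces $x_{\min}$ to be fixed by all of them, with no need to run any path inside the slice. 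Your argument can be repaired along precisely these lines: drop the earthquake-in-slice step, observe that a filling length function has a unique global minimum on all of $T(\Sigma)$, and deduce $\sigma$- and $G$-invariance of $x_{\min}$ directly from that uniqueness. (Incidentally, the lifts $\tilde\g_i$ need not be simple; neither Wolpert's nor Kerckhoff's convexity requires simplicity, so that worry in your ``hard part'' is unnecessary.)
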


In this paper, we prove a similar result for the case where $P$ is even sided and each internal angle is $\pi/2$. The main result of this paper is the following.

\begin{theorem}\label{thm:main}
    In anti-clockwise labeled, right-angled, even-sided hyperbolic polygons, the average length function of any family of cyclically related closed billiard trajectories corresponding to a given billiard sequence is uniquely minimised by the regular polygon. 
\end{theorem}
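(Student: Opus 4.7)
The plan is to adapt the Teichm\"uller-theoretic strategy used to prove Theorem \ref{thm:ideal} to the right-angled setting, replacing the deformation theory of ideal polygons with that of right-angled polygons. The four ingredients are: a doubling construction that realises the deformation space of right-angled $2k$-gons inside the Teichm\"uller space of a closed hyperbolic surface or orbifold; Kerckhoff's strict convexity of geodesic length functions along earthquake paths, with the consequence that the length sum of a filling system is a proper strictly convex function; Proposition \ref{prop:filling}, which supplies the filling property at the regular polygon; and the rotational symmetry of the regular $2k$-gon.

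First I would construct from a right-angled $2k$-gon $P$ a closed hyperbolic surface (or orbifold) $S_P$ on which $P$ appears as a fundamental domain for reflections in its sides. The natural construction is to double $P$ across its boundary to obtain $DP$, a closed hyperbolic orbifold with order-$2$ cone points at the images of the vertices of $P$; since $2k$ is even, one may further pass to a branched cover to produce a smooth surface if smoothness is needed. In either case, the deformation space $\T^{\mathrm{ra}}_{2k}$ of right-angled $2k$-gons embeds as a symmetric locus in the Teichm\"uller space of $S_P$, and each closed billiard trajectory $\g \subset P$ unfolds under the reflection group to an honest closed geodesic $\widetilde\g \subset S_P$ whose length is a fixed integer multiple of the hyperbolic length of $\g$. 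Consequently $L_{\text{avg}}(\ba,P)$ equals, up to a positive constant, the sum of the geodesic length functions $\ell_{\widetilde\g_i}$ pulled back to $\T^{\mathrm{ra}}_{2k}$.

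At the regular polygon $P_0$, Proposition \ref{prop:filling} tells us that $\bigcup_i \g_i$ fills $P_0$; unfolding, $\bigcup_i \widetilde\g_i$ fills $S_{P_0}$, and since filling is a topological condition, the same holds at every $P \in \T^{\mathrm{ra}}_{2k}$. By Kerckhoff's theorem each $\ell_{\widetilde\g_i}$ is strictly convex along earthquake paths, and the sum of lengths of a filling multicurve is a proper strictly convex function on Teichm\"uller space. Hence $L_{\text{avg}}(\ba,\cdot)$ is proper and strictly convex on $\T^{\mathrm{ra}}_{2k}$, so it attains a unique minimum. The $2k$-fold rotational symmetry of $P_0$ permutes the cyclically related trajectories in $\g_{\ba,P_0}$; invariance of $L_{\text{avg}}$ under this permutation together with the fact that $P_0$ is its fixed point forces $P_0$ to be a critical point of the strictly convex function, and therefore its unique global minimum.

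The main obstacle I anticipate lies in the first step: arranging the doubling so that it simultaneously (a) captures the full deformation space of right-angled $2k$-gons as a subspace of the Teichm\"uller space of a fixed topological surface or orbifold, and (b) unfolds every billiard trajectory on $P$ into an honest closed geodesic on $S_P$ whose length relates to the billiard length by a uniform factor. Some care is also required with the order-$2$ orbifold points of $DP$: one must either invoke an orbifold version of Kerckhoff's convexity theorem or pass to the smooth cover and check that all relevant length functions descend to the symmetric locus corresponding to $\T^{\mathrm{ra}}_{2k}$. Once the unfolding correspondence and the convexity apparatus are in place, the remainder of the argument is formal and mirrors the proof of Theorem \ref{thm:ideal}.
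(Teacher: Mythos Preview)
Your outline is correct and matches the paper's strategy closely. The paper carries out your ``branched cover of the double'' explicitly: gluing four copies of $P$ along alternately coloured sides yields a smooth closed surface $S_P$ of genus $k-1$, the deck group is $(\Z/2)^2$, and the induced involutions $\mathcal{J},\mathcal{F}$ on $T(R_0)$ have common fixed set exactly the billiard space $B(R_0)\cong\T^{\mathrm{ra}}_{2k}$ (Proposition~\ref{prop:fixed-point-set}). The one substantive difference from your sketch is that the paper uses Wolpert's strict convexity along Weil--Petersson geodesics rather than Kerckhoff's earthquake convexity. This matters for the step where you restrict to $\T^{\mathrm{ra}}_{2k}$: the fixed set of a WP isometry is automatically totally geodesic for $g_{WP}$, whereas earthquake paths between symmetric points need not remain symmetric, so your phrase ``strictly convex on $\T^{\mathrm{ra}}_{2k}$'' would require extra justification in the earthquake framework. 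The paper sidesteps this entirely by locating the unique global minimum on all of $T(R_0)$ and then using invariance of the length sum under $\mathcal{J},\mathcal{F},\rho_*$ (Proposition~\ref{prop:invariance}) together with the fact that their only common fixed point is $[R_0,\mathrm{id}]$ (Proposition~\ref{prop:fixed-pt}) to identify that minimum as the regular polygon.
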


The proof of this result closely follows the proof of Theorem~\ref{thm:ideal} given in \cite{parker2018minimizing}. The idea of the proof is as follows. We first observe that by gluing together four isometric copies of the $2k-$gon $P$ together, one can construct a closed Riemann surface of genus $k-1$, which we call the \textit{billiard surface}, $S_P$, corresponding to $P$. Any closed billiard trajectory $\g$ lifts to a collection of closed geodesics on $S_P$. The number of lifts depends entirely on the combinatorial properties of the billiard sequence of $\g$.\\

The space of all billiard surfaces constructed from $k-$gons forms a subspace of the Teichm\"{u}ller space of the closed surface $S_{k-1}$. We endow the Teichm\"{u}ller space with the Weil-Petersson metric. We show that the billiard space is exactly the fixed-point set of an order 2 Weil-Petersson isometry.
This also leads to the conclusion that the billiard space is a geodesically convex subspace of the Teichm\"{u}ller space. Then, using results regarding the convexity of the length function of closed geodesics under the Weil-Petersson metric in \cite{wolpert1987geodesic} and Kerckhoff's result about the length function of filling systems in \cite{kerckhoff1983nielsen} we show that there is a unique point where the global minimum is obtained. This global minimum is then identified as the regular right-angled polygon using the rotational symmetry of the regular polygon.\\
         
Further, we also prove an analogous result for Lambert quadrilaterals. A hyperbolic Lambert quadrilateral is a four-sided polygon with three right angles and an acute angle. Let $Q\subset \mathbb{D}^2$ be an anti-clockwise labeled hyperbolic Lambert quadrilateral with the acute angle being $\pi/k$. From Theorem 2.3.1 in \cite{buser2010geometry} one can conclude that just one side length determines $Q$ completely. There is a unique quadrilateral $Q_0$, which we call the regular Lambert quadrilateral, which is symmetric about its long axis, or equivalently where $a=b$ and $\a=\b$ in Figure~\ref{fig:lambert}.\\
\begin{figure}
    \centering
    \def\svgwidth{0.8\textwidth}
    \begingroup%
  \makeatletter%
  \providecommand\color[2][]{%
    \errmessage{(Inkscape) Color is used for the text in Inkscape, but the package 'color.sty' is not loaded}%
    \renewcommand\color[2][]{}%
  }%
  \providecommand\transparent[1]{%
    \errmessage{(Inkscape) Transparency is used (non-zero) for the text in Inkscape, but the package 'transparent.sty' is not loaded}%
    \renewcommand\transparent[1]{}%
  }%
  \providecommand\rotatebox[2]{#2}%
  \newcommand*\fsize{\dimexpr\f@size pt\relax}%
  \newcommand*\lineheight[1]{\fontsize{\fsize}{#1\fsize}\selectfont}%
  \ifx\svgwidth\undefined%
    \setlength{\unitlength}{265.95878832bp}%
    \ifx\svgscale\undefined%
      \relax%
    \else%
      \setlength{\unitlength}{\unitlength * \real{\svgscale}}%
    \fi%
  \else%
    \setlength{\unitlength}{\svgwidth}%
  \fi%
  \global\let\svgwidth\undefined%
  \global\let\svgscale\undefined%
  \makeatother%
  \begin{picture}(1,0.37919174)%
    \lineheight{1}%
    \setlength\tabcolsep{0pt}%
    \put(0,0){\includegraphics[width=\unitlength,page=1]{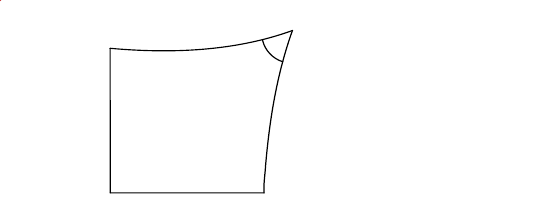}}%
    \put(0.16885548,0.13953031){\color[rgb]{0,0,0}\makebox(0,0)[lt]{\lineheight{1.25}\smash{\begin{tabular}[t]{l}$a$\end{tabular}}}}%
    \put(0.3071583,0.29622047){\color[rgb]{0,0,0}\makebox(0,0)[lt]{\lineheight{1.25}\smash{\begin{tabular}[t]{l}$\b$\end{tabular}}}}%
    \put(0.49271154,0.15911616){\color[rgb]{0,0,0}\makebox(0,0)[lt]{\lineheight{1.25}\smash{\begin{tabular}[t]{l}$\a$\end{tabular}}}}%
    \put(0.30320575,0.00302853){\color[rgb]{0,0,0}\makebox(0,0)[lt]{\lineheight{1.25}\smash{\begin{tabular}[t]{l}$b$\end{tabular}}}}%
    \put(0,0){\includegraphics[width=\unitlength,page=2]{lambert.pdf}}%
  \end{picture}%
\endgroup%

 % \import{figures}{lambert-tex.tex}
%    \import{lambert-tex.tex}
    \caption{On the left is a Lambert quadrilateral. On the right we have a pair of reflectively related billiard trajectories corresponding to the billiard sequence $\ba = (2,3,4)$ and $\bar{\ba} = (1,4,3)$.}
    \label{fig:lambert}
\end{figure}
Let $\g_0$ be a closed billiard trajectory on $Q_0$ and let $\bar{\g}_0$ be the reflection of $\g_0$ along the axis of reflection of $Q_0$. Given any closed billiard trajectory $\g$ on $Q$, using the anti-clockwise labeling we get a billiard sequence $\ba$ corresponding to $\g$. There is a billiard trajectory $\g_0$ on $Q_0$ which corresponds to the billiard sequence $\ba$. Then, let $\bar{\ba}$ be the billiard sequence of the closed billiard trajectory of $\bar{\g}_0$. Let $\bar{\g}$ represent the closed billiard trajectory on $Q$ corresponding to $\bar{\ba}$. We say that $\g$ and $\bar{\g}$ are \textit{reflectively related} and correspond to the billiard sequence $\ba$. Denote pairs of reflectively related closed billiard trajectories corresponding to a billiard sequence $\ba$ by $\g_{\ba, Q}$. We prove the following result about billiard trajectories on Lambert quadrilaterals.

\begin{theorem}\label{thm:lambert}
In anti-clockwise labeled hyperbolic Lambert quadrilateral, the average length function of any pair of reflectively related closed billiard trajectories corresponding to a given billiard sequence is uniquely minimised by the regular Lambert quadrilateral. 
\end{theorem}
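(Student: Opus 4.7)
My approach closely parallels the proof of Theorem~\ref{thm:main} by passing to an unfolded right-angled $2k$-gon and applying Weil--Petersson convexity. I would begin by constructing the unfolded polygon $R=R(Q)$: the dihedral group $D_k$ of order $2k$ generated by the reflections in the two edges of $Q$ meeting at the acute vertex has as its orbit a region $R$, which I claim is a right-angled $2k$-gon. Indeed, at each image of a right-angle vertex incident to one of the two unfolding edges, two copies of $Q$ meet with total angle $\pi$ (producing a smooth arc of $\partial R$), whereas each image of the remaining right-angle vertex is a genuine right-angle corner of $R$. The assignment $Q\mapsto R(Q)$ embeds the one-parameter space of Lambert quadrilaterals with acute angle $\pi/k$ as the $D_k$-symmetric locus in the space of right-angled $2k$-gons, and it sends the regular Lambert quadrilateral $Q_0$ to the regular right-angled $2k$-gon $R_0$.

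Composing with the billiard surface construction from Theorem~\ref{thm:main} then realises the Lambert locus as a subvariety of the Teichm\"uller space of the genus-$(k-1)$ billiard surface $S_R$; because $D_k$ acts by Weil--Petersson isometries, this subvariety is totally geodesic. Under the unfolding, a closed billiard $\g$ on $Q$ becomes a closed $R$-billiard $\g_R$ (each bounce on one of the unfolding edges is replaced by a straight crossing into the reflected copy of $Q$, while bounces on the other two edges become boundary bounces of $R$), and the reflective partner $\bar\g$ becomes $\bar\g_R=r\cdot\g_R$ for the reflection $r\in D_k$ corresponding to the long axis of $Q$. The pair average length $L_{\text{avg}}(\ba,Q)$ is then a fixed rational multiple, depending only on $\ba$ and $k$, of the sum of the Weil--Petersson length functions of the closed geodesic lifts of $\g_R$ and $\bar\g_R$ on $S_R$.

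The conclusion then follows the template of Theorem~\ref{thm:main}. Proposition~\ref{prop:filling} applied to the regular $R_0$, combined with the $D_k$-symmetry of the unfolding, would show that the collection of lifted geodesics from $\g_R$ and $\bar\g_R$, augmented by the further reflection symmetries used to build $S_R$ from four copies of $R$, fills $S_R$. Kerckhoff's theorem together with Wolpert's theorem then imply that $L_{\text{avg}}(\ba,Q)$ is strictly convex and proper on the Lambert locus, hence attains a unique global minimum. This minimum must be fixed by the $\Z_2$-symmetry that exchanges $\g$ and $\bar\g$, and the only Lambert quadrilateral with such an extra reflective symmetry is $Q_0$.

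The most delicate step will be verifying the filling claim. On the regular $R_0$, Proposition~\ref{prop:filling} supplies a filling $C_{2k}$-orbit of closed billiards, but the unfolded pair $(\g,\bar\g)$ on a general $R(Q)$ only yields the $D_k$-orbit of $\g_R$, which is strictly smaller than the $C_{2k}$-orbit. Showing that this $D_k$-orbit still generates a filling system on $S_R$ after the further unfolding to the billiard surface, and that the combinatorial scaling constant between $L_{\text{avg}}(\ba,Q)$ and the lifted geodesic length sum is independent of $Q$, will require careful bookkeeping of the unfolding combinatorics.
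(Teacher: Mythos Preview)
Your unfolding of $Q$ to a right-angled $2k$-gon $R(Q)$ is exactly the construction the paper uses (there denoted $P_Q$), but the paper then takes a much shorter route: rather than redoing the Weil--Petersson convexity/properness argument on the Lambert locus, it observes that the lift $\pi_Q^{-1}(\gamma\cup\bar\gamma)$ is already a full cyclically-related family $\gamma_{\bb,P_Q}$ on the $2k$-gon, so that $L_{\text{avg}}(\ba,Q)$ equals a fixed positive multiple of $L_{\text{avg}}(\bb,P_Q)$. Theorem~\ref{thm:main} then applies directly: the latter is uniquely minimised over all right-angled $2k$-gons at $P_0=P_{Q_0}$, and since $P_0$ lies in the image of $Q\mapsto P_Q$, the former is uniquely minimised at $Q_0$. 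No separate filling check, no convexity argument, and no symmetry identification of the minimiser is needed beyond what Theorem~\ref{thm:main} already provides.

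The concern you raise in your last paragraph is exactly the observation that makes this shortcut work, and your analysis of it is off. Your $D_k$ (reflections in the two edges at the acute vertex) sits inside the full symmetry group $D_{2k}$ of the regular $2k$-gon $P_0$ as the subgroup generated by $\rho^2$ together with the reflections through \emph{edge midpoints} of $P_0$. The long-axis reflection $r$ relating $\gamma$ and $\bar\gamma$, however, is a reflection through a \emph{vertex} of $P_0$, so $r\notin D_k$; since $[D_{2k}:D_k]=2$ this gives $\langle D_k,r\rangle=D_{2k}\supset C_{2k}=\langle\rho\rangle$. Hence the lift of the \emph{pair} $(\gamma,\bar\gamma)$ is the $D_{2k}$-orbit of $\gamma_R$, not merely its $D_k$-orbit, and in particular it is a union of full $C_{2k}$-orbits. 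Proposition~\ref{prop:filling} therefore applies immediately, and so does Theorem~\ref{thm:main} to each such orbit, dissolving the difficulty you anticipated. Your longer route via a fresh Weil--Petersson argument would also succeed once this is noted, but it is not needed.
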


The proof of this directly follows from Theorem~\ref{thm:main} as $2k$ copies of $Q$ can be glued together to construct a right-angled $2k-$gon so that the reflectively related billiard trajectories on $Q$ form a family of cyclically related billiard trajectories on the $2k-$gon.\\

In Section~\ref{sec:2} we discuss the construction of billiard surfaces and discuss the lift of closed billiard trajectories on the surface. Then in Section~\ref{sec:3} we discuss basics of Teichm\"{u}ller theory and Weil-Petersson metric, define the Billiard space, and prove some useful properties of the Billiard space. In Section~\ref{sec:4} we introduce the geodesic length function and state some useful results about it. Section~\ref{sec:5} is dedicated to proving Theorem~\ref{thm:main} and Section~\ref{sec:6} is dedicated to proving Theorem~\ref{thm:lambert}. 

%%%%%%%%%%%%%%%%%%%%%%%%%%%%%%%%%%%%%%%%%%%%%%%%%%%%%%%%%%%%%%%%%%%%%%%%%%%%%%%%%%%%%%%%%%%%%%%%%
\section{Construction of Billiard Surface}\label{sec:2}
Let $P=P_1\subset \mathbb{D}^2$ be a right-angled hyperbolic $2k-$gon. Number the sides of $P_1$ from $1$ to $2k$ in the anticlockwise order and colour the sides of $P$ in blue and red alternatively so that the edge labeled $1$ is blue. Let $P_2$ and $P_3$ be two isometric copies of $P_1$ but with the opposite orientation, and let $P_4$ be an isometric copy of $P_1$ with the same orientation. Gluing $P_1$ and $P_2$, $P_3$ and $P_4$ along their blue edges with the same label gives two isometric surfaces $S$, $S'$ each with $k$ geodesic boundary components. Gluing $S$ and $S'$ along the boundary components with the same label without any twisting results in a closed Riemann surface $S_P$. We call $S_P$ the \textit{billiard surface} corresponding to $P$. From this construction we also get a preferred collection $\Gamma_P$ of non-separating simple closed geodesics on $S_P$ which are colored red or blue and correspond to the sides of the polygons $P_i$, see Figure~\ref{fig:hexagons}. Label the red curves $\a_1,\cdots, \a_k$ and the blue curves $\b_1,\cdots, \b_k$. There are $2k$ points of intersection, $v_1,\cdots, v_{2k}$, of these closed geodesics which correspond to the vertices of the polygons $P_i$. Let $\pi_P:S_P \to P$ be the natural projection of $S_P$ onto $P$.\\
\begin{figure}
    \centering
    \def\svgwidth{0.6\textwidth}
    \begingroup%
  \makeatletter%
  \providecommand\color[2][]{%
    \errmessage{(Inkscape) Color is used for the text in Inkscape, but the package 'color.sty' is not loaded}%
    \renewcommand\color[2][]{}%
  }%
  \providecommand\transparent[1]{%
    \errmessage{(Inkscape) Transparency is used (non-zero) for the text in Inkscape, but the package 'transparent.sty' is not loaded}%
    \renewcommand\transparent[1]{}%
  }%
  \providecommand\rotatebox[2]{#2}%
  \newcommand*\fsize{\dimexpr\f@size pt\relax}%
  \newcommand*\lineheight[1]{\fontsize{\fsize}{#1\fsize}\selectfont}%
  \ifx\svgwidth\undefined%
    \setlength{\unitlength}{718.73248524bp}%
    \ifx\svgscale\undefined%
      \relax%
    \else%
      \setlength{\unitlength}{\unitlength * \real{\svgscale}}%
    \fi%
  \else%
    \setlength{\unitlength}{\svgwidth}%
  \fi%
  \global\let\svgwidth\undefined%
  \global\let\svgscale\undefined%
  \makeatother%
  \begin{picture}(1,0.90727057)%
    \lineheight{1}%
    \setlength\tabcolsep{0pt}%
    \put(0,0){\includegraphics[width=\unitlength,page=1]{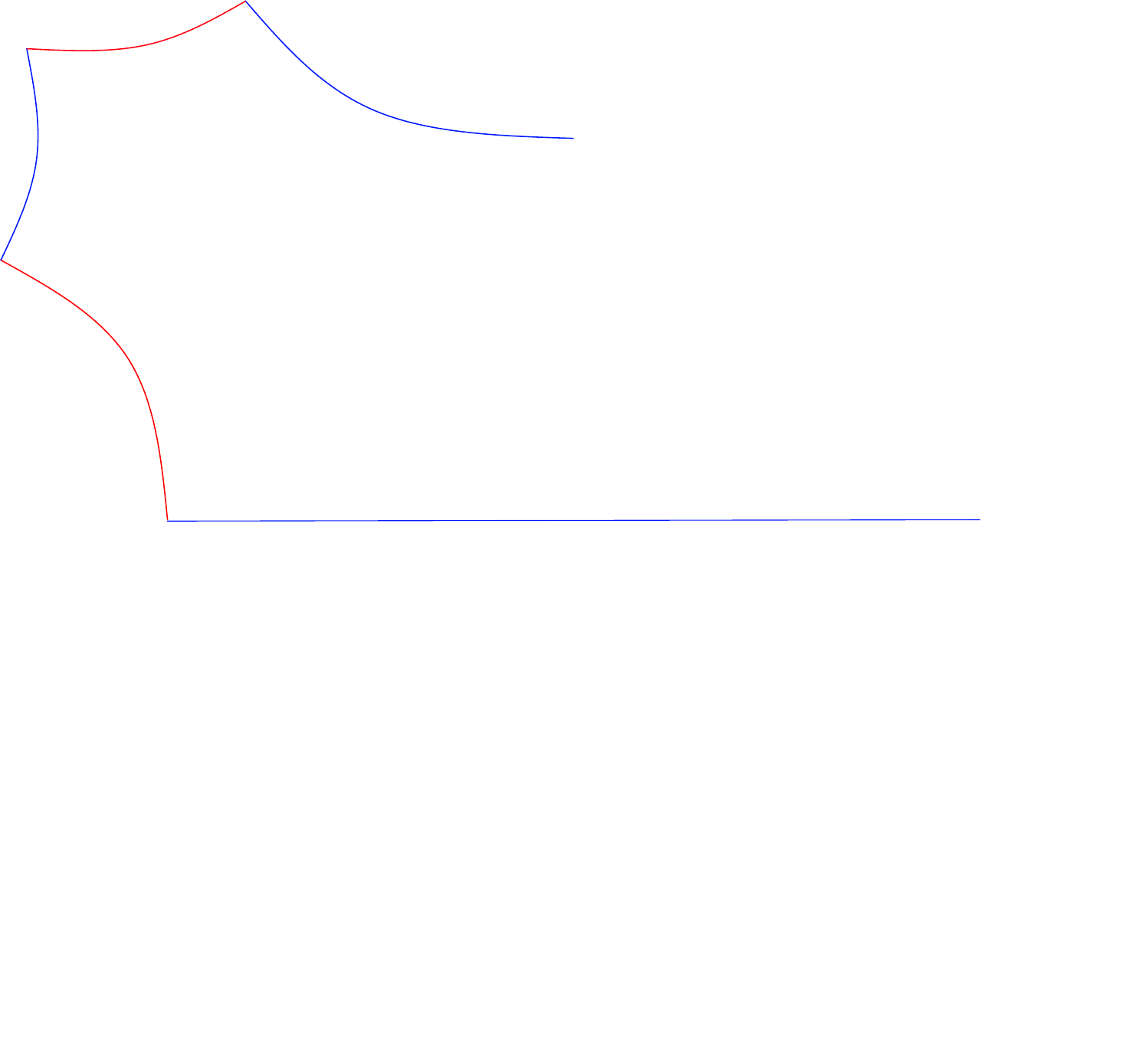}}%
    \put(0.32554437,0.81970563){\color[rgb]{0.02352941,0.02745098,0.04705882}\makebox(0,0)[lt]{\lineheight{1.25}\smash{\begin{tabular}[t]{l}$1$\end{tabular}}}}%
    \put(0.12042888,0.88393496){\color[rgb]{0.02352941,0.02745098,0.04705882}\makebox(0,0)[lt]{\lineheight{1.25}\smash{\begin{tabular}[t]{l}$2$\end{tabular}}}}%
    \put(0.00607567,0.77171468){\color[rgb]{0.02352941,0.02745098,0.04705882}\makebox(0,0)[lt]{\lineheight{1.25}\smash{\begin{tabular}[t]{l}$3$\end{tabular}}}}%
    \put(0.08542928,0.58210209){\color[rgb]{0.02352941,0.02745098,0.04705882}\makebox(0,0)[lt]{\lineheight{1.25}\smash{\begin{tabular}[t]{l}$4$\end{tabular}}}}%
    \put(0.33598283,0.46289485){\color[rgb]{0.02352941,0.02745098,0.04705882}\makebox(0,0)[lt]{\lineheight{1.25}\smash{\begin{tabular}[t]{l}$5$\end{tabular}}}}%
    \put(0.47789599,0.60925185){\color[rgb]{0.02352941,0.02745098,0.04705882}\makebox(0,0)[lt]{\lineheight{1.25}\smash{\begin{tabular}[t]{l}$6$\end{tabular}}}}%
    \put(0.36606911,0.08146776){\color[rgb]{0.02352941,0.02745098,0.04705882}\makebox(0,0)[lt]{\lineheight{1.25}\smash{\begin{tabular}[t]{l}$1$\end{tabular}}}}%
    \put(0.07841588,0.01610203){\color[rgb]{0.02352941,0.02745098,0.04705882}\makebox(0,0)[lt]{\lineheight{1.25}\smash{\begin{tabular}[t]{l}$2$\end{tabular}}}}%
    \put(0.00830051,0.13666976){\color[rgb]{0.02352941,0.02745098,0.04705882}\makebox(0,0)[lt]{\lineheight{1.25}\smash{\begin{tabular}[t]{l}$3$\end{tabular}}}}%
    \put(0.09726037,0.33081648){\color[rgb]{0.02352941,0.02745098,0.04705882}\makebox(0,0)[lt]{\lineheight{1.25}\smash{\begin{tabular}[t]{l}$4$\end{tabular}}}}%
    \put(0.47784188,0.26746172){\color[rgb]{0.02352941,0.02745098,0.04705882}\makebox(0,0)[lt]{\lineheight{1.25}\smash{\begin{tabular}[t]{l}$6$\end{tabular}}}}%
    \put(0.59340813,0.8100901){\color[rgb]{0.02352941,0.02745098,0.04705882}\makebox(0,0)[lt]{\lineheight{1.25}\smash{\begin{tabular}[t]{l}$1$\end{tabular}}}}%
    \put(0.87309381,0.88026515){\color[rgb]{0.02352941,0.02745098,0.04705882}\makebox(0,0)[lt]{\lineheight{1.25}\smash{\begin{tabular}[t]{l}$2$\end{tabular}}}}%
    \put(0.96961637,0.78177841){\color[rgb]{0.02352941,0.02745098,0.04705882}\makebox(0,0)[lt]{\lineheight{1.25}\smash{\begin{tabular}[t]{l}$3$\end{tabular}}}}%
    \put(0.88944231,0.57180297){\color[rgb]{0.02352941,0.02745098,0.04705882}\makebox(0,0)[lt]{\lineheight{1.25}\smash{\begin{tabular}[t]{l}$4$\end{tabular}}}}%
    \put(0.66292846,0.4629272){\color[rgb]{0.02352941,0.02745098,0.04705882}\makebox(0,0)[lt]{\lineheight{1.25}\smash{\begin{tabular}[t]{l}$5$\end{tabular}}}}%
    \put(0.59124182,0.08221242){\color[rgb]{0.02352941,0.02745098,0.04705882}\makebox(0,0)[lt]{\lineheight{1.25}\smash{\begin{tabular}[t]{l}$1$\end{tabular}}}}%
    \put(0.87444373,0.01166166){\color[rgb]{0.02352941,0.02745098,0.04705882}\makebox(0,0)[lt]{\lineheight{1.25}\smash{\begin{tabular}[t]{l}$2$\end{tabular}}}}%
    \put(0.97860092,0.12949377){\color[rgb]{0.02352941,0.02745098,0.04705882}\makebox(0,0)[lt]{\lineheight{1.25}\smash{\begin{tabular}[t]{l}$3$\end{tabular}}}}%
    \put(0.89020016,0.3315051){\color[rgb]{0.02352941,0.02745098,0.04705882}\makebox(0,0)[lt]{\lineheight{1.25}\smash{\begin{tabular}[t]{l}$4$\end{tabular}}}}%
    \put(0.23335799,0.67146776){\color[rgb]{0.02352941,0.02745098,0.04705882}\makebox(0,0)[lt]{\lineheight{1.25}\smash{\begin{tabular}[t]{l}$P_1$\end{tabular}}}}%
    \put(0.68294613,0.67987904){\color[rgb]{0.02352941,0.02745098,0.04705882}\makebox(0,0)[lt]{\lineheight{1.25}\smash{\begin{tabular}[t]{l}$P_3$\end{tabular}}}}%
    \put(0.25085474,0.26690322){\color[rgb]{0.02352941,0.02745098,0.04705882}\makebox(0,0)[lt]{\lineheight{1.25}\smash{\begin{tabular}[t]{l}$P_2$\end{tabular}}}}%
    \put(0.71397421,0.27371994){\color[rgb]{0.02352941,0.02745098,0.04705882}\makebox(0,0)[lt]{\lineheight{1.25}\smash{\begin{tabular}[t]{l}$P_4$\end{tabular}}}}%
    \put(0,0){\includegraphics[width=\unitlength,page=2]{hexagon-gluing.pdf}}%
  \end{picture}%
\endgroup%
%    \import{hexagon-gluing.pdf_tex}
%    \import{figures}{hexagon-gluing-tex.tex}
    \caption{Gluing four copies of a right-angled hexagon to obtain the billiard surface. The blue edges in $P_1$ (resp. $P_3$) are glued to the blue edges in $P_2$ (resp. $P_4$), and the red edges in $P_1$ (resp. $P_2$) are glued to the red edges in $P_3$ (resp. $P_4$)}.
    \label{fig:hexagons}
\end{figure}
The billiard surface $S_P$ has 3 natural maps coming from permuting the polygons $P_i$. Let $J_P:S_P\to S_P$ be the map induced from applying the permutation $(12)(34)$ to the polygons $P_1,\cdots, P_4$; and $K_P:S_P\to S_P$ be the map induced from applying the permutation $(13)(24)$. Both of these are order 2 anti-holomorphic maps of $S_P$ and they preserve the red and blue geodesics. The composition $J_PK_P = K_PJ_P: S_P\to S_P$ is an order 2 holomorphic map, $F_P$, which fixes the points $v_1,\cdots, v_k$. From this description it follows that $F_P$ is the hyperelliptic involution on $S_P$.\\

Let $\g_1$ be a billiard trajectory and its associated billiard sequence be $\ba = (a_0,\cdots, a_{n-1})$. Then $\g_{\ba, P} = \{\g_1, \cdots, \g_n\}$ is the collection of all cyclically related billiard paths with billiard sequence $\ba$. Finally, in this section we count the number of closed geodesics in $\pi_P^{-1}(\g_i)$.

\begin{proposition}\label{prop:counting}
  Let $\g_{\ba, P}$ be as described above. The number of closed geodesic curves in the lift $\pi_P^{-1}(\g_i)$ of the billiard trajectory $\g_i\in \g_{\ba, P}$ is $1,2,$ or $4$.
\end{proposition}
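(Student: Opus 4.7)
The strategy is to reduce the count of components to an orbit problem under the deck group of the cover $\pi_P : S_P \to P$.

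First, I would identify the deck group as the Klein four-group $G = \langle J_P, K_P\rangle \cong \Z_2 \times \Z_2$. By construction $\pi_P \circ J_P = \pi_P \circ K_P = \pi_P$, so $G \subseteq \mathrm{Deck}(\pi_P)$; the topological degree of $\pi_P$ is $4$ (one preimage per copy $P_j$) and $|G|=4$, so the cover is regular and $G$ is its full deck group.

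Next, I would compute the monodromy $\mu \in G$ of the closed loop $\g_i$ directly from the billiard sequence. Starting a lift in the polygon $P_1$ and following $\g_i$, every crossing of a blue edge of $P$ moves the lift to the adjacent polygon via the permutation $(12)(34)=J_P$ (blue edges are precisely where the pairs $P_1 \sim P_2$ and $P_3 \sim P_4$ are glued), and every crossing of a red edge applies $(13)(24)=K_P$. Because $G$ is abelian, $\mu = J_P^{\,b}\,K_P^{\,r}$, where $b$ and $r$ are the parities of the numbers of blue and red edges appearing in $\ba$. In particular $\mu$ has order $1$ or $2$.

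The standard correspondence for regular covers identifies the connected components of $\pi_P^{-1}(\g_i)$ with the orbits of $\langle\mu\rangle$ on a fiber, giving $|G|/|\langle\mu\rangle| \in \{2, 4\}$ components generically; this already accounts for two of the three cases. The remaining value $1$ arises exactly when a further deck symmetry $g \in G \setminus \langle\mu\rangle$ preserves one of the two $2$-fold lifts setwise, collapsing two nominally distinct lifts into a single closed geodesic on which $g$ acts as a fixed-point-free half-period shift.

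The main obstacle is ruling out intermediate values (like $3$) and verifying that the reduction to $1$ can only occur through such a free involution. Specifically, if $g$ acted on a lift with a one-dimensional fixed set, then $g$ would have to be one of the anti-holomorphic involutions $J_P$ or $K_P$, forcing the lift into $\mathrm{Fix}(J_P) \cup \mathrm{Fix}(K_P) = \bigcup_j \a_j \cup \bigcup_j \b_j$; but $\g_i$ is a genuine billiard trajectory and not a boundary edge of $P$, which is a contradiction. Hence the only admissible values are $1$, $2$, and $4$.
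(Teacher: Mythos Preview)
Your approach is the same as the paper's: exploit the Klein four–group $G=\langle J_P,K_P\rangle$ acting on the lifts. The paper simply produces one lift $\tilde\g$, lists $\tilde\g,\,J_P\tilde\g,\,K_P\tilde\g,\,J_PK_P\tilde\g$ as all possible lifts, and rules out $3$ by observing that if $I\tilde\g=\tilde\g$ for some nontrivial $I$ then the remaining two images coincide (because $I'I=I''$ in $\Z_2\times\Z_2$). Your monodromy computation $\mu=J_P^{b}K_P^{r}$ is a correct and somewhat more explicit refinement of the paper's case split into ``ends in $P_1$'' versus ``ends in $P_j\neq P_1$''.

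There is, however, a genuine gap in your argument as written. The map $\pi_P$ is a \emph{branched} cover (the edges of $P$ are fixed by $J_P$ or $K_P$, and $\g_i$ certainly meets them), so the ``standard correspondence for regular covers'' giving exactly $|G|/|\langle\mu\rangle|$ components does not apply; you acknowledge this with the word ``generically'' but then never supply the replacement. The correct statement is that $G$ acts \emph{transitively} on the set of lifted closed geodesics (fix an interior point $p\in\g_i$; its four preimages are permuted simply transitively by $G$, and each lift is determined by which preimage it passes through), so the number of lifts is the index $[G:H]$ of the stabiliser $H$ of one lift. Since $\Z_2\times\Z_2$ has no subgroup of index $3$, the value $3$ is immediately excluded.

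Your final paragraph does not establish this. It argues that no nontrivial $g\in G$ can fix a lift \emph{pointwise}, which is true but irrelevant: the proposition does not ask how the stabiliser acts, only how large it is, and ruling out pointwise fixing neither excludes $3$ nor is needed to allow $1$. Once you have transitivity and the orbit–stabiliser count, the proof is complete; the discussion of free involutions versus one–dimensional fixed sets can be dropped.
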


\begin{proof}
  Let the billiard sequence corresponding to $\g_i$ be $(a_1,\cdots, a_n)$. We use the following convention to construct closed curves in the lift of $\g_i$: We start in the polygon $P_1$ and trace out the billiard trajectory using the billiard sequence. Each $a_j$ corresponds to a ``transition" from some $P_i$ to some $P_j$. After going through all $(a_1,\cdots, a_n)$ suppose that the trajectory ends in some $P_j$. In the case $P_j=P_1$ the curve closes up to form a closed geodesic $\tilde{\g}_i$. Using the isometries we get at most four lifts in total: $\tilde{\g}_i, J_P(\tilde{\g}_i), K_P(\tilde{\g}_i), J_PK_P(\tilde{\g}_i)$. Note here that $3$ lifts is not possible since, if $\g_i=I(\g_i)$ for $I\in \{J_P,K_P,J_PK_P\}$ then it is easy to show that $I', I''\in\{J_P,K_P,J_PK_P\}-\{I\}$ are such that $I'(\g_i) = I''(\g_i)$. Thus the number of lifts here is $1,2$, or $4$.

  Now suppose that $J_i\neq J_1$. One of the order 2 maps $J_P,K_P, J_PK_P$ interchanges $P_1$ and $P_j$, call this isometry $I$. Using the isometry $I$ twice, one can see that by tracing out $\mathbf{a}$ twice on $S_P$ we get a closed curve $\tilde{\g}_i$. Again, using the isometries $J_P,K_P,J_PK_P$ we obtain other lifts of $\g_i$, but since $I$ is one of them and fixes $\g_i$, in this case there are at most $2$ distinct lifts.
\end{proof}

\begin{remark}
  In the genus $2$ case $J_PK_P$ fixes every simple closed geodesic and $J_P(\a) = K_P(\a)$ for any simple closed geodesic $\a$. Thus, it is easy to construct examples where there are only 1 or 2 lifts in the genus $2$ case.
 For example, the billiard trajectory corresponding to the sequence $(1,4)$ on a right-angled hexagon, $P$, has a unique lift to the genus-$2$ surface $S_P$. Also, the upper bound of $4$ is achieved in every surface of genus $g\geq 3$.
\end{remark}
%%%%%%%%%%%%%%%%%%%%%%%%%%%%%%%%%%%%%%%%%%%%%%%%%%%%%%%%%%%%%%%%%%%%%%%%%%%%%%%%%%%%%%%%%%%%%%%%%
\section{Teichm\"{u}ller Space and the Billiard Space}\label{sec:3}
Let $P_0\subset \mathbb{D}^2$ be the unique regular right-angled $2k-$gon and $R_0$ be the corresponding billiard surface. By construction, there is a curve system $\Gamma_0$ on $R_0$, as described in the previous section. By definition, a regular polygon has a rotational symmetry, i.e. the rotation, $\rho$, by angle $\pi/k$ is holomorphic transformation of $P_0$. This induces a holomorphic automorphism of the surface $R_0$, which we also denote by $\rho$. Let $J_0:R_0\to R_0$ and $F_0:R_0\to R_0$ be the order 2 isometries induced from the permutation of the polygons as in the previous section.

\begin{definition}
    A marking of a Riemann surface $S$ by the surface $R_0$ is an orientation preserving homeomorphism $f:R_0\to S$. Two markings $(S,f)$ and $(S',f')$ are said to be equivalent if $f'f^{-1}:S\to S'$ is isotopic to a holomorphic map. The equivalence classes, $[S,f]$, of markings is called a marked surface. The Teichm\"{u}ller space $T(R_0)$ is the collection of marked surfaces.     
\end{definition}

The Teichm\"{u}ller space $T(R_0)$ has a natural complex structure on it. Given any orientation preserving homeomorphism $\vp: R_0\to R_0$ it induces a map $\vp_*:T(R_0)\to T(R_0)$ which maps $[S,f] \mapsto [S, f\circ \vp]$. If $S$ is a Riemann surface then let $S^*$ be the same surface as $S$ but with opposite orientation and let $\iota:S\to S^*$ be the identity map which reverses the orientation of $S$.

\begin{definition}
    Define the following two maps $\mathcal{J}, \mathcal{F}:T(R_0)\to T(R_0)$ as
    \begin{align}
        \mathcal{J}([S,f:R_0\to S]) = (S^*, \iota f J_0: R_0 \to S^*),\\
        \mathcal{F}([S,f:R_0\to S]) = (S, f F_0: R_0 \to S)
    \end{align}
\end{definition}

The even sided right-angled $2k-$gon $P$, where $k\geq 4$, can always be decomposed into right-angled hexagons using the following unique orthogonal geodesics between the blue edges: $(1,5)$, $(1,7)$, $\cdots$, $(1,2k-3)$. This gives $k-3$ geodesic arcs in $P$ which we color green. Following the construction of $S_P$, these green geodesic arcs form $2(k-3)$ simple closed geodesics on $S_P$ which we label $\delta_1,\cdots, \delta_{k-3}$ and $\delta'_1,\cdots, \delta'_{k-3}$ so that $F_P(\delta_i) = \delta'_i$ for each $i=1,\cdots, k-3$, see Figure~\ref{fig:genus-3}. The curve system, $\mathcal{P}_P$, consisting of the red and green geodesics gives a pants decomposition of $S_P$ with the blue curves being the seam of the pants.\\ 
\begin{figure}
    \centering
    \def\svgwidth{\textwidth}
    \begingroup%
  \makeatletter%
  \providecommand\color[2][]{%
    \errmessage{(Inkscape) Color is used for the text in Inkscape, but the package 'color.sty' is not loaded}%
    \renewcommand\color[2][]{}%
  }%
  \providecommand\transparent[1]{%
    \errmessage{(Inkscape) Transparency is used (non-zero) for the text in Inkscape, but the package 'transparent.sty' is not loaded}%
    \renewcommand\transparent[1]{}%
  }%
  \providecommand\rotatebox[2]{#2}%
  \newcommand*\fsize{\dimexpr\f@size pt\relax}%
  \newcommand*\lineheight[1]{\fontsize{\fsize}{#1\fsize}\selectfont}%
  \ifx\svgwidth\undefined%
    \setlength{\unitlength}{833.94208885bp}%
    \ifx\svgscale\undefined%
      \relax%
    \else%
      \setlength{\unitlength}{\unitlength * \real{\svgscale}}%
    \fi%
  \else%
    \setlength{\unitlength}{\svgwidth}%
  \fi%
  \global\let\svgwidth\undefined%
  \global\let\svgscale\undefined%
  \makeatother%
  \begin{picture}(1,0.56038462)%
    \lineheight{1}%
    \setlength\tabcolsep{0pt}%
    \put(0,0){\includegraphics[width=\unitlength,page=1]{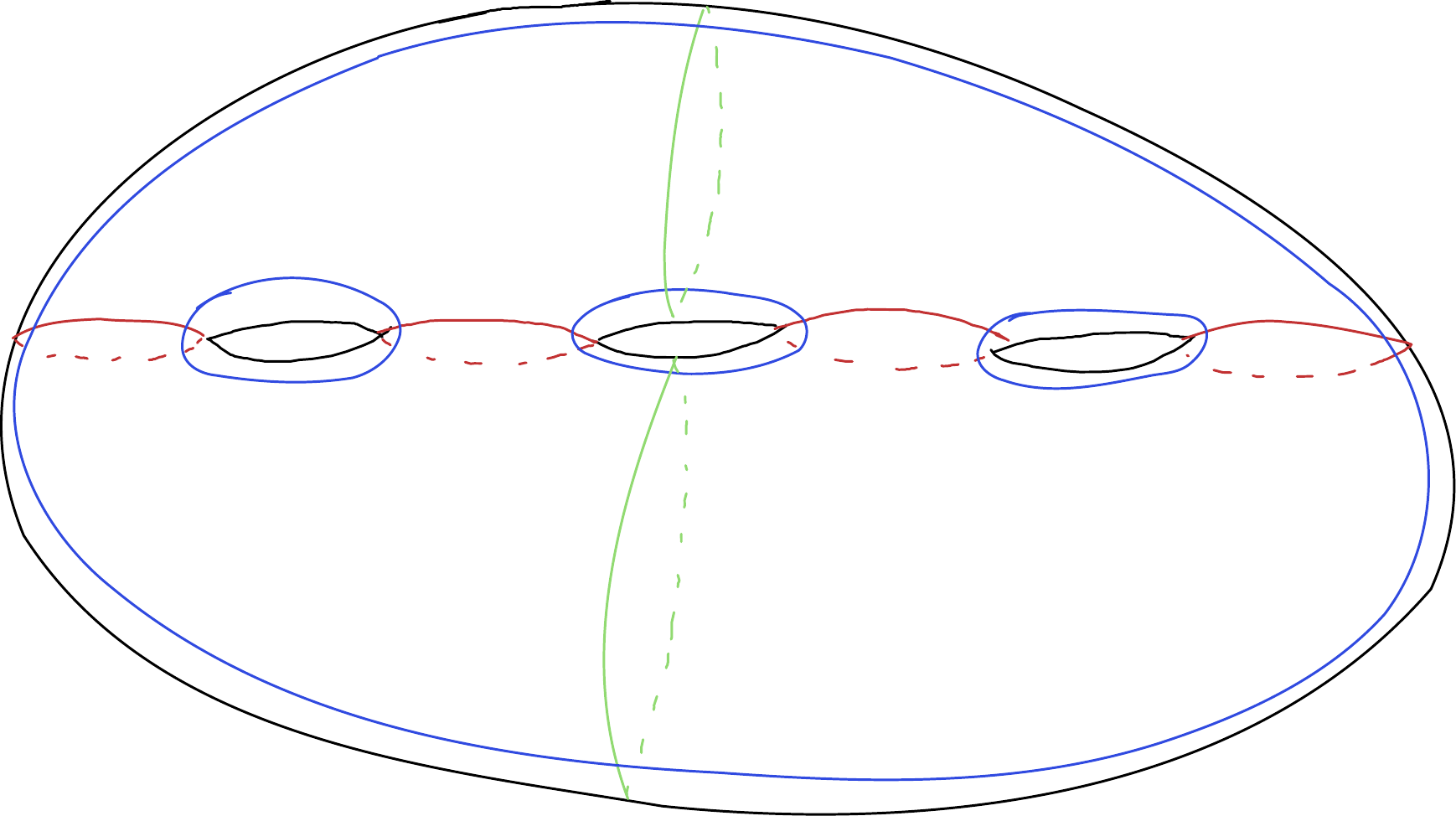}}%
    \put(0.0568176,0.3482828){\color[rgb]{0.02352941,0.02745098,0.04705882}\makebox(0,0)[lt]{\lineheight{1.25}\smash{\begin{tabular}[t]{l}$\a_1$\end{tabular}}}}%
    \put(0.18312768,0.37823464){\color[rgb]{0.02352941,0.02745098,0.04705882}\makebox(0,0)[lt]{\lineheight{1.25}\smash{\begin{tabular}[t]{l}$\b_2$\end{tabular}}}}%
    \put(0.32556278,0.3522267){\color[rgb]{0.02352941,0.02745098,0.04705882}\makebox(0,0)[lt]{\lineheight{1.25}\smash{\begin{tabular}[t]{l}$\a_2$\end{tabular}}}}%
    \put(0.50600575,0.36424405){\color[rgb]{0.02352941,0.02745098,0.04705882}\makebox(0,0)[lt]{\lineheight{1.25}\smash{\begin{tabular}[t]{l}$\b_3$\end{tabular}}}}%
    \put(0.60782311,0.360712){\color[rgb]{0.02352941,0.02745098,0.04705882}\makebox(0,0)[lt]{\lineheight{1.25}\smash{\begin{tabular}[t]{l}$\a_3$\end{tabular}}}}%
    \put(0.73173147,0.35525457){\color[rgb]{0.02352941,0.02745098,0.04705882}\makebox(0,0)[lt]{\lineheight{1.25}\smash{\begin{tabular}[t]{l}$\b_4$\end{tabular}}}}%
    \put(0.87030268,0.3480825){\color[rgb]{0.02352941,0.02745098,0.04705882}\makebox(0,0)[lt]{\lineheight{1.25}\smash{\begin{tabular}[t]{l}$\a_4$\end{tabular}}}}%
    \put(0.32204577,0.50557286){\color[rgb]{0.02352941,0.02745098,0.04705882}\makebox(0,0)[lt]{\lineheight{1.25}\smash{\begin{tabular}[t]{l}$\b_1$\end{tabular}}}}%
    \put(0.42933949,0.43734515){\color[rgb]{0.02352941,0.02745098,0.04705882}\makebox(0,0)[lt]{\lineheight{1.25}\smash{\begin{tabular}[t]{l}$\delta_1$\end{tabular}}}}%
    \put(0.39022449,0.16305628){\color[rgb]{0.02352941,0.02745098,0.04705882}\makebox(0,0)[lt]{\lineheight{1.25}\smash{\begin{tabular}[t]{l}$\delta'_1$\end{tabular}}}}%
  \end{picture}%
\endgroup%
%    \import{Genus-3.pdf_tex}
%    \import{figures}{Genus-3-tex.tex}
    \caption{The curve systems $\Gamma_P$ and $\mathcal{P}_P$ in the case where $P$ is an octagon.}
    \label{fig:genus-3}
\end{figure}

Let $\mathcal{P}_0$ be the pants decomposition on $R_0$ as constructed above. This induces a pants decomposition $f(\mathcal{P}_0)$ on the marked surface $[S,f]\in T(R_0)$. The lengths, $\ell_{[S.f]}(\a)$, of the geodesic representative of $\a$ and the twist parameter $\tau_{[S,f]}(\a)$, measured with respect to the geodesic representatives of the seams $\{f(\b_i)\}_{i=1}^{k}$, for all $\a\in f(\mathcal{P}_0)$ gives a coordinate system on $T(R_0)$, called the \textit{Fenchel-Nielsen coordinates} with respect to $\mathcal{P}_0$.\\

The Teicm\"{u}ller space, $T(R_0)$, can be endowed with a complex structure with a symplectic form, $\omega_{WP}$ called the Weil-Petersson form. In the Fenchel-Nielsen coordinates with respect to $\mathcal{P}_0$, denoted $(\ell_1,\cdots, \ell_{3k-6}, \tau_1,\cdots, \tau_{3k-6})$, the Weil-Petersson symplectic form can be expressed using Wolpert's formula \cite{wolpert1985weil} as:
\begin{align}
    \omega_{WP} = \sum_{j=1}^{3k-6} \dd \tau_j \wedge \dd \ell_j 
\end{align}
The Weil-Petersson form induces a Riemannian metric, $g_{WP}$, on the Teichm\"{u}ller space. It is well known that the action of the extended mapping class group, $\text{Map}^\pm(R_0)$ which is defined as $\text{Homeo}(R_0)/\text{Homeo}_0(R_0)$, leaves the metric $g_{WP}$ invariant, see \cite{imayoshi1992teichmuller}. The Riemannian metric $g_{WP}$ is also uniquely geodesic.\\     

Finally, in this section we define the billiard space of $R_0$ and prove some useful properties.

\begin{definition}
    The billiard space, $B(R_0)$, is a subset of the Teichm\"{u}ller space, $T(R_0)$, where $x = [S,f]\in B(R_0)$ if
    \begin{enumerate}
        \item $\tau_x(\a) = 0$ for all $\a\in f(\mathcal{P}_0)$.
        \item The length of the geodesic representative of the image of $\delta_i$ and $\delta'_i$ in $S$ is the same, \textit{i.e.} $\ell_x(f(\delta_i)) = \ell_x(f(\delta'_i))$ for all $i=1,\cdots, k-3$.
    \end{enumerate}
\end{definition}

\begin{remark}\label{rem:marked-billiard-surface}
    There are quasi-conformal homeomorphisms between the right-angled hexagons in $R_0 - (\Gamma_0\cup \mathcal{P}_0)$ and the right-angled hexagons in $S_P - (\Gamma_P \cup \mathcal{P}_P)$, so that the edges and vertices map to the edges and vertices of the corresponding hexagons. These maps can be chosen such that upon gluing back they induce a quasi-conformal map $f_P:R_0\to S_P$ which maps the graph $\Gamma_0\cup \mathcal{P}_0$ to the graph $\Gamma_P \cup \mathcal{P}_P$, preserving edges and vertices. This results in a marked surface $[S_P,f_P]\in T(R_0)$. It is clear from this construction that $[S_P,f_P]\in B(R_0)$.  
\end{remark}

\begin{proposition}\label{prop:fixed-point-set}
    The subspace $B(R_0)$ is exactly the simultaneous fixed points of $\mathcal{J}$ and $\mathcal{F}$.
\end{proposition}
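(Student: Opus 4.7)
The plan is to compute the action of each of $\mathcal{F}$ and $\mathcal{J}$ on the Fenchel-Nielsen coordinates associated to $\mathcal{P}_0$ and the seams $\{\beta_j\}$, and then read off the common fixed locus. By the construction in Section~\ref{sec:2}, both involutions preserve the union $\mathcal{P}_0\cup\{\beta_j\}$ as a set: the orientation-preserving map $F_0$ fixes each $\alpha_i$ and each $\beta_j$ setwise and interchanges $\delta_i\leftrightarrow\delta'_i$, whereas the anti-holomorphic $J_0$ fixes every individual curve in $\mathcal{P}_0\cup\{\beta_j\}$ setwise. These facts are immediate from the polygon permutations $(14)(23)$ and $(12)(34)$ inducing $F_0$ and $J_0$ respectively.

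For $\mathcal{F}$, since the FN twist is the parameter of the earthquake flow along an unoriented pants curve (with sign fixed by the orientation of $S$), re-marking by the orientation-preserving map $F_0$ simply permutes the FN coordinates according to its action on $\mathcal{P}_0$:
\[
\mathcal{F}:\ (\ell_{\alpha_i},\tau_{\alpha_i})\mapsto (\ell_{\alpha_i},\tau_{\alpha_i}), \qquad (\ell_{\delta_i},\tau_{\delta_i})\leftrightarrow (\ell_{\delta'_i},\tau_{\delta'_i}),
\]
so that $\mathrm{Fix}(\mathcal{F})$ is cut out by $\ell_{\delta_i}=\ell_{\delta'_i}$ and $\tau_{\delta_i}=\tau_{\delta'_i}$ for every $i$. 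For $\mathcal{J}$, the conjugation factor $\iota:S\to S^*$ reverses the orientation of the target surface and so, by Wolpert's formula $\omega_{WP}=\sum d\tau\wedge d\ell$, flips the sign of every twist while preserving all lengths; the subsequent re-marking by $J_0$ leaves the FN coordinates untouched because $J_0$ fixes every pants curve of $\mathcal{P}_0$ setwise. Hence
\[
\mathcal{J}:\ (\ell_\alpha,\tau_\alpha)\mapsto (\ell_\alpha,-\tau_\alpha)\quad\text{for all }\alpha\in\mathcal{P}_0,
\]
and $\mathrm{Fix}(\mathcal{J})=\{\tau_\alpha=0\text{ for all }\alpha\in f(\mathcal{P}_0)\}$. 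Intersecting, the condition $\tau_{\delta_i}=\tau_{\delta'_i}$ in $\mathrm{Fix}(\mathcal{F})$ becomes vacuous, and what remains is exactly the pair $\tau_\alpha=0$ for all $\alpha\in f(\mathcal{P}_0)$ together with $\ell_{\delta_i}=\ell_{\delta'_i}$, which is the definition of $B(R_0)$.

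The one delicate point is justifying the transformation rules for the twists under the re-markings --- in particular, verifying that reversing only the chosen orientation of a pants curve (with no change of surface orientation) leaves $\tau$ unchanged, so that the sign-flip in $\mathcal{J}$ is contributed purely by the conjugation $\iota$ and not by $J_0$. This is essentially the statement that the earthquake flow is intrinsic to the unoriented curve. A useful sanity check is that the base point $[R_0,\mathrm{id}]$, which lies in $B(R_0)$ by construction with all twists zero and all matched lengths equal by the rotational symmetry of $P_0$, is visibly fixed by both $\mathcal{F}$ and $\mathcal{J}$.
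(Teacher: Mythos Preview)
Your proof follows the same strategy as the paper's: compute the action of $\mathcal{F}$ and $\mathcal{J}$ on the Fenchel--Nielsen coordinates associated to $\mathcal{P}_0$ and identify the common fixed locus as $B(R_0)$. The only difference is in the bookkeeping of twist signs --- the paper attributes the vanishing of $\tau_{\alpha_i}$ to $\mathcal{F}$ (via $F_0$ reversing the orientation of $\alpha_i$) and of $\tau_{\delta_i},\tau_{\delta'_i}$ to $\mathcal{J}$, whereas you, using the standard convention that the earthquake flow is intrinsic to the unoriented curve, place all twist sign-flips on the conjugation $\iota$ in $\mathcal{J}$; both routes yield the same intersection.
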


\begin{proof}
  Let $[S,f]\in B(R_0)$. Then $f( \mathcal{P}_0)$ is a pants decomposition of $[S,f]$ and $fF_0( \mathcal{P}_0)$ is a pants decomposition of $[S,fF_0]$. The Fenchel-Nielsen coordinates with respect to $\mathcal{P}_0$ of $[S,f]$ is so that all the twists parameters are zero and $\ell(f(\delta_i)) = \ell(f(\delta'_i))$. Note that $F_0(\a) =\a$ for any curve $\a\in \Gamma_0$ but with orientation reversed and $F_0(\delta_i) = \delta'_i$. Thus, all the twist parameters of $[S,fF_0]$ are zero. Since all lengths and twists are preserved we conclude that $[S,f] = [S,fF_0]$. \\

  Similarly to the proof above, it can be shown that $B(R_0)\subset \text{Fix}(\mathcal{J})$ as $J_0$ fixes every curve in the pants decomposition 
  $ \mathcal{P}_0$ but with reversed orientation.\\

  Suppose that $x = [S,f]\in \text{Fix}( \mathcal{F})\cap \text{Fix}( \mathcal{J})$. As $ \mathcal{F}$ fixes $x$ it follows that $\tau_{x}(\a_i) = 0$, $\tau_x(\delta_i) = -\tau_x(\delta'_i)$, and $\ell(f(\delta_i)) = \ell(f(\delta'_i))$, since $F_0$ reverses the orientation of all $\a_i$ and maps $\delta_i$ to $\delta'_i$. As $ \mathcal{J}$ fixes $x$ it follows that $\tau_x(\delta_i) = \tau_x(\delta'_i) = 0$ since $J_0$ preserves all $\delta_i$. Thus, we have that $x\in B(R_0)$. This completes this proof.
\end{proof}

\begin{proposition}
Let $x= [S,f]$ be a marked surface. Then $x\in B(R_0)$ if and only if $x = [S_P,f_P]$.
\end{proposition}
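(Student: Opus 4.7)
The plan is to prove each direction separately. The forward direction, that $[S_P,f_P]\in B(R_0)$ for every right-angled $2k$-gon $P$, was essentially established in Remark~\ref{rem:marked-billiard-surface}: the quasi-conformal marking $f_P\colon R_0\to S_P$ constructed there sends the graph $\Gamma_0\cup\mathcal{P}_0$ onto $\Gamma_P\cup\mathcal{P}_P$ preserving the colours of edges and positions of vertices; since the four copies $P_i$ are glued without twisting and $K_P$ swaps $\d_i$ with $\d'_i$ isometrically, both defining conditions of $B(R_0)$ are met.

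For the reverse direction, let $x=[S,f]\in B(R_0)$. By Proposition~\ref{prop:fixed-point-set}, $x$ is fixed by both $\mathcal{F}$ and $\mathcal{J}$, so each of $fF_0f^{-1}$ and $fJ_0f^{-1}$ is isotopic to a holomorphic, respectively anti-holomorphic, self-map of $S$. Since on a compact Riemann surface of genus $\geq 2$ the group $\text{Aut}(S)$ injects into the mapping class group, these representatives are unique and are honest involutions $F_S,J_S\colon S\to S$. Set $K_S:=F_SJ_S$, which is anti-holomorphic of order two, and $G_S:=\langle F_S,J_S\rangle\cong\Z/2\times\Z/2$. The fixed locus of $J_S$ is a disjoint union of $k$ simple closed geodesics, each in the isotopy class of some $f(\b_i)$; I call these the blue curves. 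Similarly $K_S$ fixes $k$ red closed geodesics, each in the class of an $f(\a_i)$. At any intersection point $v$ of a red and a blue curve, $F_S=J_SK_S$ is a holomorphic involution fixing $v$, hence acts as rotation by $\pi$ on the tangent space at $v$; since the product of two reflections in geodesic lines through $v$ is a rotation by twice the angle between them, the red and blue geodesics cross at right angles at every such $v$.

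Cutting $S$ along all the red and blue geodesics therefore produces four isometric right-angled hyperbolic $2k$-gons permuted freely and transitively by $G_S$. Choosing any one of them as $P$ exhibits $S$ as the billiard surface $S_P$ of Section~\ref{sec:2}, with $F_S,J_S,K_S$ corresponding to $F_P,J_P,K_P$. For the markings, the $(F_0,J_0)$-equivariance of $f$ forces it to send $\Gamma_0\cup\mathcal{P}_0$ onto an isotopic copy of $\Gamma_P\cup\mathcal{P}_P$ with colours and vertices matched; on each of the complementary hexagons both $f$ and $f_P$ can then be isotoped to agree on the boundary, and since two orientation-preserving homeomorphisms of a disc that agree on the boundary are isotopic rel boundary we conclude $[S,f]=[S_P,f_P]$. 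The delicate step is the orthogonality calculation at the fixed points of $F_S$, because it is this purely geometric input, rather than the combinatorial $(\Z/2)^2$-symmetry alone, that upgrades the abstract quotient $S/G_S$ to a genuine right-angled polygonal fundamental domain; everything after that is a routine check that the cut-and-paste matches the construction of $S_P$.
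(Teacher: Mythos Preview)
Your proof is correct and follows the same overall arc as the paper: invoke Proposition~\ref{prop:fixed-point-set} to obtain commuting involutions on $S$, use them to exhibit $S$ as a billiard surface $S_P$, then match the markings. The execution differs in two places worth noting. For the geometry, you identify the red and blue geodesics as the fixed loci of the anti-holomorphic involutions $K_S$ and $J_S$ and deduce orthogonality from the local rotation action of $F_S=J_SK_S$ at their intersection points; the paper instead argues that the conformal maps $g,h,gh$ permute the four faces of $S-f(\Gamma_0)$ and obtains the right angle from the fact that four isometric copies of $P$ meet at each vertex, so the angles sum to $2\pi$. Your fixed-locus argument is more explicit about which geodesics one cuts along, while the paper's vertex-count is more elementary once the four faces are in hand. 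For the markings, the paper invokes Alexander's method on the filling curve system $\Gamma_0$ to conclude that $If$ and $f_P$ are isotopic, whereas you isotope directly on the complementary hexagonal discs; both work, with Alexander's method serving as a cleaner black box and your argument relying on the complementary regions being discs, which you have already shown. One small point you glide over is why cutting along the red and blue geodesics yields exactly four pieces; this follows since geodesics realise minimal intersection, so the complement has the same combinatorics as $R_0-\Gamma_0$.
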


\begin{proof}
    The converse is true from the discussion in Remark~\ref{rem:marked-billiard-surface}. Suppose that $x\in B(R_0)$. Then from 
    Proposition~\ref{prop:fixed-point-set} it follows that $x$ is fixed by $\mathcal{J}, \mathcal{F}$ which means that $fF_0f^{-1}$ and 
    $\iota fJ_0f^{-1}$ are homotopic to conformal maps $g, h$ respectively. The maps $g$, $h$ and $gh$ preserve each curve in $f(\Gamma_0)$ while 
    permuting the four even sided polygon faces in $S-f(\Gamma_0)$, implying that they are all isometric to an even sided hyperbolic polygon $P$. 
    Since the edges of these faces glue together in pairs to form $S$ and the four polygons meet at each vertex in $f(\Gamma_0)$, it follows that 
    all the angles of the polygon $P$ are right angles. Thus $S$ is isometric to the surface $S_P$ via some isometry $I$ and the curve system 
    $\Gamma_P = If(\Gamma_0)$. This means that both $If$ and $f_P$ have the same action on the graph $\Gamma_0$. Since $\Gamma_0$ satisfies the 
    assumptions of Alexander's method it follows that $If$ is homotopic to $f_P$. Thus $[S,f] = [S_P, f_P]$.   
\end{proof}

\begin{proposition}
    $B(R_0)$ is a convex subspace of $T(R_0)$. Moreover, $B(R_0)$ is an isotropic submanifold of $(T(R_0),\omega_{WP})$.
\end{proposition}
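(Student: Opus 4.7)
The plan is to exploit Proposition~\ref{prop:fixed-point-set}, which identifies $B(R_0)$ with the simultaneous fixed locus $\mathrm{Fix}(\mathcal{J})\cap\mathrm{Fix}(\mathcal{F})$ in $T(R_0)$. Convexity and isotropy rest on rather different inputs, so I would treat them separately.

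For convexity, I would first observe that $J_0$ and $F_0$ are homeomorphisms of $R_0$, so $\mathcal{J}$ and $\mathcal{F}$ belong to the extended mapping class group action on $T(R_0)$ and therefore preserve $g_{WP}$ by the result of \cite{imayoshi1992teichmuller} quoted in the excerpt. Combined with the unique geodesic property of $g_{WP}$, this implies that the fixed point set of any such isometry $\varphi$ is geodesically convex: given $p,q\in\mathrm{Fix}(\varphi)$, the unique WP-geodesic $\gamma$ joining $p$ and $q$ is mapped by $\varphi$ to a geodesic from $p$ to $q$, hence $\varphi(\gamma)=\gamma$ by uniqueness; the restriction $\varphi|_\gamma$ is then an isometry of an arc fixing both endpoints and must be the identity. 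Applying this argument to $\mathcal{J}$ and $\mathcal{F}$ separately and intersecting the two convex sets yields convexity of $B(R_0)$.

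For isotropy, the plan is to read off the vanishing of $\omega_{WP}|_{B(R_0)}$ directly from Wolpert's formula. Every twist coordinate $\tau_j$ in the Fenchel--Nielsen coordinates with respect to $\mathcal{P}_0$ is identically zero on $B(R_0)$ by definition, so each $d\tau_j$ vanishes on tangent vectors of $B(R_0)$; substituting into $\omega_{WP}=\sum_{j=1}^{3k-6} d\tau_j\wedge d\ell_j$ gives $\omega_{WP}|_{TB(R_0)}=0$. To make sense of this I would first check that $B(R_0)$ is a smooth submanifold, which is immediate: $\{\tau_j=0 \text{ for all } j\}$ is a coordinate subspace, and the remaining defining equations $\ell(f(\delta_i))-\ell(f(\delta'_i))=0$ for $i=1,\ldots,k-3$ have linearly independent differentials because they pair distinct length coordinates, so together they cut out a smooth submanifold of dimension $2k-3$.

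The main conceptual step is the convexity argument, which depends entirely on the two inputs the paper has already introduced---Weil--Petersson isometry of the extended mapping class group action and unique WP-geodesics---so I do not foresee a genuine technical obstacle. The careful point is to keep orientations straight when verifying that $\mathcal{J}$, despite being induced by an antiholomorphic involution, is still a well-defined map of Teichm\"uller space and a WP isometry; this is precisely why $J_0$ was composed with the orientation-reversing identification $\iota$ in the definition of $\mathcal{J}$.
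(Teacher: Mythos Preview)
Your convexity argument is essentially the paper's: both invoke that $\mathcal{J}$ and $\mathcal{F}$ act as Weil--Petersson isometries via the extended mapping class group, combine this with the uniquely geodesic property of $g_{WP}$, and conclude that the intersection of the two fixed-point sets is convex. Your version is in fact slightly more careful in noting that the geodesic is fixed pointwise, not merely setwise.

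For isotropy you take a genuinely different route. The paper argues abstractly: since $\mathcal{J}$ is anti-symplectic, i.e.\ $\mathcal{J}_*\omega_{WP}=-\omega_{WP}$, and $B(R_0)\subset\mathrm{Fix}(\mathcal{J})$, the restriction of $\omega_{WP}$ to $B(R_0)$ equals its own negative and hence vanishes. You instead read the vanishing off directly from Wolpert's formula $\omega_{WP}=\sum d\tau_j\wedge d\ell_j$, using that all $\tau_j$ are identically zero on $B(R_0)$ so each $d\tau_j$ restricts to zero. Your approach is more hands-on and avoids importing the anti-symplectic fact from \cite{imayoshi1992teichmuller}; the paper's approach is the standard ``fixed set of an anti-symplectic involution is isotropic'' argument, which is more conceptual and would generalise to situations where one does not have such convenient coordinates. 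Both are correct, and your explicit dimension count $2k-3$ is a welcome addition that the paper omits.
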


\begin{proof}
    The maps $ \mathcal{F}$ and $\mathcal{J}$ are an isometry of $g_{WP}$ as they are induced from the action of the (extended) mapping classes 
    $[F_0]$ and $[J_0]$ respectively. From Proposition~\ref{prop:fixed-point-set} it follows that $ B(R_0)$ is exactly the intersection of the fixed 
    point set of $ \mathcal{F}$ and $ \mathcal{J}$, both of which are isometries of $g_{WP}$. Any geodesic connecting two points in $B(R_0)$ can be 
    mapped to some other geodesic connecting the same two points by both $\mathcal{J}$ and $\mathcal{F}$ but since the metric is uniquely geodesic 
    it follows that any such geodesic lies in the fixed point set of both isometries $\mathcal{J}, \mathcal{F}$. Therefore, it lies in $B(R_0)$. 
    This proves $B(R_0)$ is convex.\\

    Since $\mathcal{J}_*(\omega_{WP}) = - \omega_{WP}$, see \cite{imayoshi1992teichmuller}, and $B(R_0)\subset \text{Fix}(\mathcal{J})$ it follows that $\omega_{WP} = 0$ when restricted to the Billiard space $B(R_0)$. Hence $B(R_0)$ is an isotropic submanifold of $T(R_0)$. 
\end{proof}
%%%%%%%%%%%%%%%%%%%%%%%%%%%%%%%%%%%%%%%%%%%%%%%%%%%%%%%%%%%%%%%%%%%%%%%%%%%%%%%%%%%%%%%%%%%%%%%
\section{The geodesic length function}\label{sec:4}

\begin{definition}
    Let $\tilde{\g} = \{\tilde{\g}_1,\cdots, \tilde{\g}_N\}$ be a collection of closed geodesics on $R_0$ and let $x=[S,f]\in T(R_0)$ be some marked surface. The associated geodesic length function $L_{\tilde{\g}}:T(R_0)\to (0,\infty)$ is defined as
    $$
    L_{\tilde{\g}}(x) = \sum_{i=1}^N \ell_x(f(\tilde{\g}_i))
    $$
\end{definition}

The following theorem of Wolpert in \cite{wolpert1987geodesic} states the continuity and convexity properties of the geodesic length function and will be used in proof of Theorem~\ref{thm:main}.

\begin{theorem}\label{thm:Wolpert}
    Let $\tilde{\g}\subset R_0$ be a finite collection of closed geodesics and let $L_{\tilde{\g}}$ be the associated geodesic length function. Then $L_{\tilde{\g}}$ is continuous and strictly convex along every Weil-Petersson geodesic.   
\end{theorem}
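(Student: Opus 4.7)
The plan is to split the statement into its continuity and strict convexity parts, and then reduce strict convexity first to the case of a single closed geodesic and finally to that of a simple one.

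Continuity is the straightforward half. The length $\ell_x(f(\tilde{\g}_i))$ is the hyperbolic length of the unique geodesic representative of a fixed free homotopy class on the marked surface $x=[S,f]$, and in Fenchel--Nielsen coordinates this depends real-analytically on $x$; a finite sum of real-analytic functions is certainly continuous, giving continuity of $L_{\tilde{\g}}$.

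For strict convexity, I would first reduce to a single closed geodesic: a sum of strictly convex functions is strictly convex, so it suffices to show that $x\mapsto \ell_x(f(\tilde{\g}_i))$ is strictly convex along Weil--Petersson geodesics for each $i$. If $\tilde{\g}_i$ is not simple, I would pass to a finite cover $p:\tilde{R}_0\to R_0$ in which the preimage of $\tilde{\g}_i$ is a disjoint union of simple closed curves; such a cover exists by subgroup separability of surface groups. The pullback map $p^\ast:T(R_0)\to T(\tilde{R}_0)$ is a Weil--Petersson isometric embedding whose image is totally geodesic, and the length of $\tilde{\g}_i$ at $x\in T(R_0)$ equals a fixed positive multiple of the sum of the lengths of its simple lifts at $p^\ast(x)$; hence strict convexity descends from the simple case established on the cover.

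It therefore remains to prove strict convexity of $\ell_\bullet(\a)$ for a single simple closed geodesic $\a\subset R_0$. I would follow Wolpert's original approach in \cite{wolpert1987geodesic}: parametrise a Weil--Petersson geodesic $x(t)$ by its initial velocity, realised as a harmonic Beltrami differential $\mu$; express $\ell_{x(t)}(\a)$ through the trace of the holonomy element associated to $\a$ in the Fuchsian representation; and differentiate twice in $t$. The main obstacle, and the heart of the proof, is to show that the resulting Hessian is a strictly positive quadratic form in $\mu$. This is achieved by rewriting $\ddot{\ell}_{x(t)}(\a)$ as an $L^2$-integral of a Jacobi-type vector field along $\a$ sourced by $\mu$, which gives manifest non-negativity, and then verifying that this vector field vanishes identically only when $\mu\equiv 0$, i.e.\ only when the Weil--Petersson geodesic is constant. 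Strict positivity of the Hessian along every non-trivial Weil--Petersson geodesic then yields the claimed strict convexity.
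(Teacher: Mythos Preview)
The paper does not prove this theorem; it is quoted as a result of Wolpert and simply cited to \cite{wolpert1987geodesic}. Your sketch therefore goes well beyond what the paper itself provides, and the comparison is really with Wolpert's original argument.

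Your final paragraph has the right shape and is essentially Wolpert's method: express $\ell_\alpha$ through the holonomy, compute the second Weil--Petersson variation via harmonic Beltrami differentials, and show the resulting Hessian is strictly positive. Note, however, that this computation works for an \emph{arbitrary} closed geodesic $\alpha$, simple or not; the holonomy formula and the positivity argument never use simplicity. Your intermediate reduction to simple curves via finite covers is therefore unnecessary.

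That detour also carries two small inaccuracies worth flagging. First, the pullback $p^\ast:T(R_0)\to T(\tilde R_0)$ is not a Weil--Petersson isometric embedding but a homothety (the WP pairing scales by the degree of the cover); this is harmless for convexity, but the statement as written is not quite right. Second, subgroup separability (Scott's theorem) produces a cover in which \emph{one} specified lift of $\tilde\gamma_i$ is simple, not one in which the entire preimage consists of simple curves; the latter can be arranged by iterating (simple curves remain simple in further covers), but you have not said so. Since the cover reduction is dispensable anyway, the cleanest repair is to delete it and apply Wolpert's second-variation computation directly to each closed geodesic $\tilde\gamma_i$.
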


Let $\ba = (a_0, \cdots, a_{n-1})$ be a billiard sequence on the polygon $P$ and let $\g_{\ba, P}$ be the collection of all cyclically related billiard trajectories with billiard sequence $\ba$. If $\g, \g'\in \g_{\ba, P_0}$ are two billiard trajectories which are cyclically related then the number of closed geodesics in the lift of $\g'$ is same as that of $\g$ since $\g' = \rho^\ell(\g)$ for some power $\ell$ and $\pi_{P_0} \rho = \rho \pi_{P_0}$. Then using the map $f_P:R_0\to S_P$, it is straightforward to see that any two billiard trajectories in $\g_{\ba, P}$ have the same number of closed geodesics in their lifts. Define $\tilde{\g}_{\ba, P}$ as the collection of closed geodesics in $\pi_P^{-1}(\g)$ for all $\g\in \g_{\ba, P}$. 
From Proposition~\ref{prop:counting} and the above discussion it is clear that $\tilde{\g}_{\ba, P}$ contains either $n$, $2n$ or $4n$ closed geodesics counted with possible repetition.\\

In the case when the lift of $\g\in \g_{\ba, P}$ has one closed geodesic in its lift, the length of that geodesic is $4\ell(\g)$. In the case when $\g$ has two closed geodesics in its lift it follows that the length of each of those closed geodesics is $2\ell(\g)$. Finally, in the last case, when there are $4$ closed geodesics in the lift of $\g$, the length of each geodesic is $\ell(\g)$. From this it follows that the average length function associated to billiard sequence $\ba = (a_0,\cdots, a_{n-1})$,
\begin{align}
L_{\text{avg}}(\ba, P) = \frac{1}{|\g_{\ba, P}|}\sum_{\g\in \g_{\ba, P}} \ell(\g)    
\end{align}
can be expressed, in any of the three cases, as
\begin{align}\label{eq:length}
    L_{\text{avg}}(\ba, P) = \frac{1}{4n} L_{\tilde{\g}_{\ba, P_0}}([S_P, f_P])
\end{align}
where we have used that $\g_{\ba, P} = f_P(\tilde{\g}_{\ba, P_0})$.

\begin{proposition}\label{prop:invariance}
Let $\ba$ be a billiard sequence and $L_{\tilde{\g}_{\ba, P_0}}$ be the geodesic length function associated to $\g_{\ba, P_0}$. Then for any $x = [S, f:R_0\to S]\in T(R_0)$
\begin{align}
    L_{\tilde{\g}_{\ba, P_0}}(x) = L_{\tilde{\g}_{\ba, P_0}}(\rho(x)) = L_{\tilde{\g}_{\ba, P_0}}(\mathcal{J}(x)) = L_{\tilde{\g}_{\ba, P_0}}(\mathcal{F}(x))
\end{align}
\end{proposition}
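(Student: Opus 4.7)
My plan is to reduce the three claimed invariances to a single statement: each of the underlying symmetries $\rho, J_0, F_0$ of $R_0$ permutes the collection $\tilde{\g}_{\ba,P_0}$ as a (multi)set of free homotopy classes of closed geodesics. Once this is established, invariance of the length function is automatic: for an orientation-preserving $\vp:R_0\to R_0$, the induced map sends $[S,f]$ to $[S,f\circ \vp]$, so if $\vp$ permutes the $\tilde{\g}_i$ as $\tilde{\g}_i\mapsto\tilde{\g}_{\sigma(i)}$, then
\begin{align*}
L_{\tilde{\g}_{\ba,P_0}}(\vp_*[S,f]) &= \sum_i \ell_x\bigl(f(\vp(\tilde{\g}_i))\bigr) \\
&= \sum_i \ell_x\bigl(f(\tilde{\g}_{\sigma(i)})\bigr) = L_{\tilde{\g}_{\ba,P_0}}([S,f]).
\end{align*}
This covers $\rho$ and $\mathcal{F}=(F_0)_*$ directly. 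For $\mathcal{J}$ the target surface carries the opposite orientation, but the hyperbolic length of a closed curve is independent of orientation, so $\ell_{S^*}(\iota(\beta))=\ell_S(\beta)$ for every closed curve $\beta\subset S$, and the same permutation argument applies.

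It then remains to verify the permutation property in each case. For $\rho$: by construction $\rho$ is a lift of the rotation of $P_0$ by $\pi/k$, that is $\pi_{P_0}\circ\rho=\rho\circ\pi_{P_0}$ (the right-hand $\rho$ acting on $P_0$). By the definition of cyclically related trajectories (cf. Proposition~\ref{prop:filling}), $\g_{\ba,P_0}$ is exactly the $\rho$-orbit of any of its elements, so $\rho$ permutes $\g_{\ba,P_0}$ and, passing to preimages under $\pi_{P_0}$, permutes the closed geodesic components of $\bigcup_{\g\in \g_{\ba,P_0}}\pi_{P_0}^{-1}(\g)$, which are the elements of $\tilde{\g}_{\ba,P_0}$. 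For $J_0$ and $F_0$: each is induced from a permutation of the four copies $P_1,\dots,P_4$ of $P_0$ used to build $R_0$, so each covers the identity on $P_0$, i.e.\ $\pi_{P_0}\circ J_0 = \pi_{P_0}\circ F_0 = \pi_{P_0}$. Consequently $J_0$ and $F_0$ preserve the preimage $\pi_{P_0}^{-1}(\g)$ setwise for every $\g\in\g_{\ba,P_0}$, and thus permute its closed geodesic components.

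I do not expect any significant obstacle. Because $\rho$, $J_0$, $F_0$ are isometries of $R_0$, they send closed geodesics to closed geodesics; distinct free homotopy classes contain distinct geodesic representatives, so the set-level map they induce on $\tilde{\g}_{\ba,P_0}$ is a genuine permutation of free homotopy classes, as required by the displayed computation. The only mild bookkeeping point is that $\tilde{\g}_{\ba,P_0}$ is allowed to contain geodesics counted with multiplicity (see the discussion after Proposition~\ref{prop:counting}); but since the permutations come directly from the action of the symmetry groups $\langle\rho\rangle$ and $\{1,J_0,K_0,F_0\}$ on lifts of trajectories, multiplicities are respected automatically.
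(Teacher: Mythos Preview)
Your proposal is correct and follows essentially the same approach as the paper: observe that each of $\rho$, $J_0$, $F_0$ permutes the closed geodesics in $\tilde{\g}_{\ba,P_0}$, and then conclude via the displayed summation that $L_{\tilde{\g}_{\ba,P_0}}$ is invariant under the induced maps on $T(R_0)$. Your write-up is in fact more detailed than the paper's (which states the permutation property without further justification and does not single out the orientation-reversing case $\mathcal{J}$), but the underlying argument is identical.
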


\begin{proof}
    Note that for any billiard sequence $\ba$ the map $I\in \{\rho, J_0,F_0\}$ permutes the closed geodesics in $\tilde{\g}_{\ba, P_0}$. Let $\mathcal{I}$ denote corresponding map induced by $I$. Then,
    \begin{align}
        L_{\tilde{\g}_{\ba, P_0}}(\mathcal{I}(x)) = \sum_{\tilde{\g}\in \tilde{\g}_{\ba, P_0}} \ell(fI(\tilde{\g})) = \sum_{\tilde{\g}\in \tilde{\g}_{\ba, P_0}} \ell(f(\tilde{\g})) = L_{\tilde{\g}_{\ba, P_0}}(x)
    \end{align}
    Here, the second equality follows from the fact that $I$ permutes the closed geodesics. This completes the proof.
\end{proof}
%%%%%%%%%%%%%%%%%%%%%%%%%%%%%%%%%%%%%%%%%%%%%%%%%%%%%%%%%%%%%%%%%%%%%%%%%%%%%%%%%%%%%%%%%%%%%%%
\section{Proof of Theorem \ref{thm:main}}\label{sec:5}
Let $\ba$ be a billiard sequence, then Theorem~\ref{thm:main} claims that
\begin{align}
    L_{\text{avg}}(\ba, P_0) < L_{\text{avg}}(\ba, P_0)
\end{align}
From \ref{eq:length} it follows that this is equivalent to the claim
\begin{align}
    L_{\tilde{\g}_{\ba, P_0}}(x_0) < L_{\tilde{\g}_{\ba, P}}(x)
\end{align}
In order to prove this, we use the following result from \cite{kerckhoff1983nielsen}.

\begin{theorem}\label{thm:kerckhoff}
    Let $\tilde{\g}$ be a finite collection of closed geodesics on $R_0$ and $L_{\tilde{\g}}$ be the associated geodesic length function. If $\tilde{\g}$ is filling, that is $R_0 - \tilde{\g}$ is a disjoint collection of discs and once punctured discs, then $L_{\g}$ is a proper function.
\end{theorem}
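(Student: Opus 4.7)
The plan is to deduce properness from Mumford's compactness criterion for Teichm\"{u}ller space, combined with the collar lemma and the filling hypothesis. First, I would invoke Mumford's criterion: a sequence $x_n = [S_n, f_n] \in T(R_0)$ leaves every compact set if and only if the systole tends to zero, i.e.\ there exist essential simple closed curves $\a_n$ on $R_0$ with $\ell_{x_n}(f_n(\a_n)) \to 0$. Since Theorem~\ref{thm:Wolpert} already gives continuity of $L_{\tilde{\g}}$, properness is equivalent to showing $L_{\tilde{\g}}(x_n) \to \infty$ along every such pinching sequence.

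Next, I would extract from the filling hypothesis a single curve in $\tilde{\g}$ whose image blows up. Because every component of $R_0 - \tilde{\g}$ is either a disc or a once-punctured disc, no essential simple closed curve on $R_0$ can be disjoint from $\tilde{\g}$: an embedded essential curve disjoint from $\tilde{\g}$ would be confined to a single complementary region, but discs contain no essential simple closed curves and once-punctured discs contain only peripheral ones. Hence for each $n$ there is some index $j(n)$ with geometric intersection number $i(\a_n, \tilde{\g}_{j(n)}) \ge 1$, and since $\tilde{\g}$ is finite I may pass to a subsequence on which $j(n) = j$ is constant.

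The collar lemma then finishes the argument. Around the geodesic representative of $f_n(\a_n)$ of length $\ell_n$ in $S_n$ there is an embedded annular collar of width $w_n = \operatorname{arcsinh}(1/\sinh(\ell_n/2))$, which tends to $\infty$ as $\ell_n \to 0$. Any closed geodesic meeting $f_n(\a_n)$ transversally must cross this collar at least once, contributing at least $2 w_n$ to its length. Applied to $f_n(\tilde{\g}_j)$, whose intersection with $f_n(\a_n)$ is positive, this gives $\ell_{x_n}(f_n(\tilde{\g}_j)) \ge 2 w_n$, and therefore $L_{\tilde{\g}}(x_n) \ge 2 w_n \to \infty$.

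The step I expect to require the most care is the filling implication in the second paragraph: one must ensure that $i(\a, \tilde{\g}_j) \ge 1$ for \emph{every} essential simple closed curve $\a$, not merely for curves in some restricted class. This is exactly where the hypothesis that complementary regions are discs or once-punctured discs is used, since higher-complexity complementary pieces would admit essential simple closed curves disjoint from $\tilde{\g}$ and destroy properness. The remaining ingredients (Mumford compactness and the collar lemma) are standard, and this is the reduction Kerckhoff carries out in \cite{kerckhoff1983nielsen}.
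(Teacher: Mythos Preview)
The paper does not prove this theorem; it quotes it directly from \cite{kerckhoff1983nielsen}. Your sketch, however, contains a genuine gap at the very first step.

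Mumford's compactness criterion applies to \emph{moduli space}, not to Teichm\"{u}ller space. In $T(R_0)$ a sequence can leave every compact set while the systole stays bounded away from zero: take any $x_0$ and set $x_n = (T_\a^n)_*(x_0)$ for a Dehn twist $T_\a$; since the mapping class group acts properly discontinuously on $T(R_0)$, the sequence $x_n \to \infty$, yet every $x_n$ is isometric to $x_0$ and has the same systole. Your collar--lemma argument covers only the pinching case and says nothing about such twisting sequences, so properness on $T(R_0)$ does not follow from what you have written.

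A standard repair is to pass to Thurston's compactification: any divergent sequence in $T(R_0)$ has a subsequence converging to some projective measured lamination $[\lambda]$, meaning there are scalars $t_n \to 0$ with $t_n\,\ell_{x_n}(\cdot) \to i(\lambda,\cdot)$; the filling hypothesis then forces $i(\lambda,\tilde{\g}_j) > 0$ for some $j$, whence $\ell_{x_n}(\tilde{\g}_j) \to \infty$. Alternatively, one can argue that if the systole is bounded below then the projections to moduli space lie in a compact set, and use that only finitely many homotopy classes have bounded length there together with the fact that a filling system has finite stabiliser in the mapping class group. Either way, an ingredient beyond the collar lemma is required, and this missing half of the argument is precisely where the work in \cite{kerckhoff1983nielsen} enters.
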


Combining Theorems~\ref{thm:Wolpert} and~\ref{thm:kerckhoff}, the following result can be proved:

\begin{corollary}
    Let $\tilde{\g}$ be a finite collection of closed geodesics which fills $R_0$. Then the associated geodesic length function $L_{\tilde{\g}}:T(R_0)\to (0,\infty)$ achieves its global minimum at a unique point $x_{\text{min}} \in T(R_0)$.   
\end{corollary}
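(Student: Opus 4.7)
The plan is the familiar existence-plus-uniqueness argument that marries Kerckhoff's properness (Theorem~\ref{thm:kerckhoff}) with Wolpert's strict convexity (Theorem~\ref{thm:Wolpert}) and the fact, recalled in Section~\ref{sec:3}, that $(T(R_0), g_{WP})$ is uniquely geodesic. Existence will follow from properness together with continuity, and uniqueness will follow from strict convexity along the unique Weil--Petersson geodesic joining any two hypothetical minima.

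For existence, I would set $L_* = \inf_{x \in T(R_0)} L_{\tilde{\g}}(x) \geq 0$ and pick a minimising sequence $\{x_n\} \subset T(R_0)$ with $L_{\tilde{\g}}(x_n) \to L_*$. Because $\tilde{\g}$ fills $R_0$, Theorem~\ref{thm:kerckhoff} says that $L_{\tilde{\g}}$ is proper, so the sublevel set $\{x : L_{\tilde{\g}}(x) \leq L_*+1\}$ is compact in $T(R_0)$ and eventually contains every $x_n$. Passing to a convergent subsequence $x_n \to x_{\text{min}}$, the continuity clause in Theorem~\ref{thm:Wolpert} gives $L_{\tilde{\g}}(x_{\text{min}}) = L_*$, so the infimum is attained.

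For uniqueness, suppose for contradiction that $x_1 \neq x_2$ both realise $L_*$. Since $g_{WP}$ is uniquely geodesic there is a unique Weil--Petersson geodesic $\sigma : [0,1] \to T(R_0)$ with $\sigma(0) = x_1$ and $\sigma(1) = x_2$. By Theorem~\ref{thm:Wolpert} the function $t \mapsto L_{\tilde{\g}}(\sigma(t))$ is strictly convex on $[0,1]$, so for any $t \in (0,1)$
\begin{equation*}
L_{\tilde{\g}}(\sigma(t)) < (1-t)L_{\tilde{\g}}(x_1) + t L_{\tilde{\g}}(x_2) = L_*,
\end{equation*}
contradicting the minimality of $L_*$. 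Hence $x_{\text{min}}$ is unique. The genuine work in this corollary is really done already by the two imported theorems; the only mildly delicate point I anticipate is the existence step, where one must be sure that the sublevel sets of $L_{\tilde{\g}}$ in $T(R_0)$ are genuinely compact rather than merely closed, and this is exactly the content of Theorem~\ref{thm:kerckhoff} applied to the filling system $\tilde{\g}$ (a filling system cannot be simultaneously pinched, so no minimising sequence can escape to the Weil--Petersson boundary of Teichm\"uller space).
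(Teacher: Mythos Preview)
Your proposal is correct and is precisely the standard argument the paper is gesturing at when it says the corollary follows by ``combining Theorems~\ref{thm:Wolpert} and~\ref{thm:kerckhoff}''; the paper gives no further proof, so you have simply written out the details (properness $\Rightarrow$ compact sublevel sets $\Rightarrow$ existence via continuity; strict convexity along the unique Weil--Petersson geodesic $\Rightarrow$ uniqueness) that the authors left implicit.
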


Coming back to the problem at hand, if we can show that the curve system $\tilde{\g}_{\ba, P_0}$ fills $R_0$ then the geodesic length function, $L_{\tilde{\g}_{\ba, P_0}}$, assumes a global minimum at a unique point $x_{\text{min}}\in T(R_0)$. Using Proposition~\ref{prop:invariance}, it follows that 
$$
L_{\tilde{\g}_{\ba, P_0}}(x_{\text{min}}) = L_{\tilde{\g}_{\ba, P_0}}(\mathcal{J}(x_{\text{min}})) = L_{\tilde{\g}_{\ba, P_0}}(\mathcal{F}(x_{\text{min}})) = L_{\tilde{\g}_{\ba, P_0}}(\rho_*(x_{\text{min}}))
$$
Thus, from the uniqueness of the global minimum, it follows that $x_{\text{min}}$ is fixed by $\mathcal{J}, \mathcal{F},$ and $\rho_*$. The following two propositions complete the proof of Theorem~\ref{thm:main}. 

\begin{proposition}
    The collection $\tilde{\g}_{\ba, P_0}$ of closed geodesics on $R_0$ is filling.
\end{proposition}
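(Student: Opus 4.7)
The plan is to lift Proposition~\ref{prop:filling} via the branched projection $\pi_{P_0}\colon R_0\to P_0$. This map is 4-to-1 on the interior of $P_0$, 2-to-1 on each open edge, and 1-to-1 at each vertex of $P_0$; the vertex is a smooth interior point of $R_0$ because the four right-angled corners of the polygon copies meet there with total angle $2\pi$. By Proposition~\ref{prop:filling}, $P_0\setminus\g_{\ba,P_0}$ is a disjoint union of topological discs of types (a), (b), (c), and $R_0\setminus\tilde{\g}_{\ba,P_0}=\pi_{P_0}^{-1}(P_0\setminus\g_{\ba,P_0})$. I want to show the preimage of each disc is a disjoint union of topological discs in $R_0$.

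I would analyse the lift of a closed disc $\overline{D}$ in the decomposition case-by-case. A type (a) disc sits in the interior of $P_0$ and lifts to four disjoint topological discs, one in each polygon copy. A type (b) disc, whose boundary includes one full edge $E$ with its two endpoints $v,w$, has four copies in $R_0$ which pair up along the two preimage arcs of $E$ (one shared by $P_1,P_2$, one shared by $P_3,P_4$) into two disjoint topological 2-discs; the shared vertices $\tilde{v},\tilde{w}$ remain boundary points of each lift since the corner angle of $\overline{D}$ at each endpoint is strictly less than $\pi/2$. A type (c) disc, whose boundary contains two adjacent edges $E_1,E_2$ meeting at common vertex $v$, has all four copies glued in a 4-cycle around $\tilde{v}$ via the four preimage arcs of these edges; the four right-angled corners at $\tilde{v}$ sum to $2\pi$, making $\tilde{v}$ an interior point of the lifted region. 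A direct CW Euler-characteristic count ($V-E+F=5-8+4=1$) together with a trace of the boundary circle confirms this lift is a single topological 2-disc.

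The main obstacle is the type (c) case, where a priori the 4-cycle gluing around $\tilde{v}$ could produce an annulus or a surface with handles rather than a disc. What rules this out is the explicit cyclic arrangement $P_1,P_2,P_4,P_3$ around $\tilde{v}$ (with alternating blue and red separating edges), together with orientability of $R_0$, which forces the four boundary $E$-arcs to identify into a single $S^1$ with the correct pairings. Once these case analyses are complete, the lifted pieces exhaust all components of $R_0\setminus\tilde{\g}_{\ba,P_0}$, with the non-trajectory parts of $\Gamma_{P_0}$ (edges minus reflection points, and vertices) absorbed as boundary or interior points of the corresponding lifted components. Since $R_0$ is closed, there are no once-punctured discs, so the filling hypothesis of Theorem~\ref{thm:kerckhoff} is satisfied.
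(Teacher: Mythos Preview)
Your approach is essentially the paper's own, carried out in more detail. The paper simply writes
\[
R_0\setminus\bigcup\tilde{\g}=\bigl(P_0\setminus\bigcup\g\bigr)\cup J_0\bigl(P_0\setminus\bigcup\g\bigr)\cup F_0\bigl(P_0\setminus\bigcup\g\bigr)\cup F_0J_0\bigl(P_0\setminus\bigcup\g\bigr),
\]
invokes Proposition~\ref{prop:filling}, and asserts the conclusion; your case-by-case verification that the four polygon pieces of each complementary disc glue back to discs across $\Gamma_{P_0}$ is exactly the content that makes this assertion rigorous. One small quibble: in your type~(b) discussion the two lifted discs are disjoint only as \emph{open} sets (they share the single vertex preimages $\tilde v,\tilde w$ on their boundaries), and the relevant angle at those corners is at most $\pi/2$ rather than strictly less, but neither point affects the topological conclusion.
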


\begin{proof}
    Since the surface $R_0$ is constructed by gluing together four isometric copies of $P_0$ we can write:
    \begin{align}
        R_0- \bigcup_{\tilde{\g}\in \tilde{\g}_{\ba, P_0}} \tilde{\g} =& \l(P_0 - \bigcup_{\g\in \g_{\ba, P_0}} \g \r) \cup J_0\l(P_0 - \bigcup_{\g\in \g_{\ba, P_0}} \g \r)\nonumber \\ &\cup F_0\l(P_0 - \bigcup_{\g\in \g_{\ba, P_0}} \g \r) \cup F_0J_0\l(P_0 - \bigcup_{\g\in \g_{\ba, P_0}} \g \r)
    \end{align}
    From Proposition \ref{prop:filling} it follows that $\g_{\ba, P_0}$ fills in $P_0$. Thus, $\tilde{\g}_{\ba, P_0}$ cuts $R_0$ into discs or once punctured discs.
\end{proof}

\begin{proposition}\label{prop:fixed-pt}
The only simultaneous fixed point of the maps $\mathcal{J}, \mathcal{F}$, and $\rho_*$ is $x_0 = [R_0, 1_{R_0}]$.
\end{proposition}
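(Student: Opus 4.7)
The plan is to combine Proposition~\ref{prop:fixed-point-set} with the subsequent proposition identifying the billiard space with the set of marked surfaces $[S_P,f_P]$. Together these state that any simultaneous fixed point of $\mathcal{J}$ and $\mathcal{F}$ already has the form $[S_P,f_P]$ for some right-angled $2k$-gon $P$. It therefore suffices to show that the extra condition $\rho_*([S_P,f_P]) = [S_P,f_P]$ forces $P = P_0$, and that the marking $f_P$ is then isotopic to the identity of $R_0$.

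From the equality of marked surfaces $[S_P, f_P\circ\rho] = [S_P,f_P]$ I would extract a holomorphic automorphism $g\colon S_P\to S_P$ of order $2k$ with $g\circ f_P \simeq f_P\circ\rho$. Since $f_P$ carries $\Gamma_0$ to $\Gamma_P$ preserving the labels of the red and blue curves, $g$ cyclically permutes the $2k$ curves of $\Gamma_P$ in exactly the same pattern in which $\rho$ permutes those of $\Gamma_0$. Consequently $g$ permutes the four polygonal components $P_1,\ldots,P_4$ of $S_P-\Gamma_P$ by the same permutation $\tau$ that $\rho$ induces on the corresponding regions of $R_0-\Gamma_0$.

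Next I would restrict to a single component. The map $g|_{P_i}\colon P_i\to P_{\tau(i)}$ is an orientation-preserving isometry between two labelled copies of $P$. Using the identifications $P_i\cong P\cong P_{\tau(i)}$ coming from the edge labels inherited from the curves of $\Gamma_P$, this restriction becomes an orientation-preserving isometry of $P$ sending edge $j$ to edge $j+1 \pmod{2k}$. The subgroup of $\text{Isom}(P)$ it generates is cyclic of order $2k$ and acts transitively on the $2k$ edges; combined with every interior angle of $P$ being $\pi/2$, this forces all edges of $P$ to have a common length, so $P = P_0$ and hence $S_P = R_0$.

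Finally, once $P = P_0$, the marking $f_P$ from Remark~\ref{rem:marked-billiard-surface} may be chosen to act as the identity on the graph $\Gamma_0\cup\mathcal{P}_0$; since this is a filling graph on $R_0$, Alexander's method gives $f_P\simeq 1_{R_0}$, so $x = [R_0,1_{R_0}] = x_0$. The most delicate step is the third paragraph: one must carefully track labels and orientations to confirm that the restriction of $g$ to a polygonal component really is a one-edge cyclic shift, and in particular that it is orientation-preserving rather than, say, a reflection composed with a shift, so that $P$ inherits a genuine $2k$-fold rotational symmetry about its centre.
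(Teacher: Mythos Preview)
Your proposal is correct and follows essentially the same strategy as the paper: reduce to $x=[S_P,f_P]$ via Proposition~\ref{prop:fixed-point-set}, extract an isometry of $S_P$ from the $\rho_*$-invariance, restrict it to a polygonal face to get a one-edge cyclic shift of $P$, and conclude that $P$ is regular.

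The execution differs in two small ways worth noting. First, the step you flag as delicate is exactly where the paper is more explicit: rather than asserting that $g$ permutes the four faces by the same permutation as $\rho$, the paper simply observes that $g_1(P_1)$ must be one of $P_1,\dots,P_4$, rules out $P_2,P_3$ by orientation, and if $g_1(P_1)=P_4$ composes with the hyperelliptic involution $F_P$ (which fixes each curve of $\Gamma_P$) to arrange $g_1(P_1)=P_1$. This sidesteps the need to match component permutations through a homotopy. Second, once an isometry of $P$ shifting each side to the next is in hand, the paper invokes the Brouwer fixed point theorem to locate a centre $z_0$ and compares the triangles $z_0v_iv_{i+1}$ to show adjacent side lengths agree; your transitivity-on-edges argument reaches the same conclusion more directly. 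Your final paragraph, using Alexander's method to pin down the marking as $1_{R_0}$, makes explicit a step the paper leaves implicit.
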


\begin{proof}
    Suppose that $x\in T(R_0)$ is a fixed point of $\mathcal{F}$, $\mathcal{J}$ and $\rho_*$. It follows from Proposition~\ref{prop:fixed-point-set} that $x\in B(R_0)$, since $x$ is fixed by $\mathcal{F}$ and 
    $\mathcal{J}$. This means that there is a right-angled even sided polygon $P$ such that $x = [S_P, f_P]$ and there is a special curve system $\Gamma_P$ such that $S_P-\Gamma_P$ contains four disjoint copies of $P$.\\

  Being a fixed point of $\rho_*$ means that $g_0=f_P\rho f_P^{-1}$ is homotopic to some isometry $g_1:P\to P$. By construction, $f_P$ maps edges and vertices in $\Gamma_0$ to the corresponding edges and vertices in $\Gamma_P$ then $g_1$ maps the curves $\a_i$ to $\b_{i+1}$ for $i\leq k$, $\a_{k+1}$ to $\b_1$, and $\b_i$ to $\a_i$. Thus, $g_1(P_1) = P_i$ for some $i$, but since $P_2$, $P_3$ have opposite orientations to $P_1$, we have that either $g_1(P_1) = P_1$ or $g_1(P_1)=P_4$. In the latter case, composing $g_1$ with the hyperelliptic involution $F_P$ maps $P_1$ back to $P_1$. Thus, without loss of generality we assume that $g_1$ preserves $P_1$.\\

  Looking only at $P=P_1$ inside the disc now, we have an isometry $g_1:P\to P$ which maps each vertex to the next and each side to the next. By the Brouwer fixed point theorem, there is a fixed point $z_0$ of $g_1$ in the interior of the polygon. Consider the unique geodesics connecting $z_0$ and two vertices $v_i$ and $v_{i+1}$. Then $g_1$ isometrically maps this triangle to the triangle with vertices $z_0,v_{i+1},v_{i+2}$. Thus the adjacent side lengths are equal. Hence $P$ is regular.
\end{proof}
%%%%%%%%%%%%%%%%%%%%%%%%%%%%%%%%%%%%%%%%%%%%%%%%%%%%%%%%%%%%%%%%%%%%%%%%%%%%%%%%%%%%%%%%%%%%%%%%%%%%%%%%%%%%%%%%%%%%%
\section{Billiards on hyperbolic Lambert quadrilaterals}\label{sec:6}

Let $Q\subset \mathbb{D}^2$ be a Lambert quadrilateral with the internal acute angle $\pi/k$. Let $Q_1,\cdots, Q_{2k}$ be $2k$ copies of $Q$. They can be glued together to get a right-angled $2k-$gon, $P_Q$. Note that this is symmetric under rotation by $2\pi/k$. Further, gluing together four copies of $P_Q$ gives us a surface, $S_{P_Q}$, of genus $k-1$ as in Section~\ref{sec:2}. Note that the billiard surface corresponding to $Q_0$ is exactly the surface $R_0$, as gluing together $Q_0$ gives the regular right-angled $2k-$gon, $P_0$. As before, there is a quasi-conformal map $f_Q:R_0\to S_P$ which maps the sides and vertices of the quadrilaterals $Q_0$ to the sides and vertices of $Q$. Moreover, the billiard space of Lambert quadrilaterals with angle $\pi/k$ is exactly the subspace of $B(R_0)$ which is fixed by $\rho_*^2:T(R_0)\to T(R_0)$.\\

There is a natural projection $\pi_Q:P_Q\to Q$. Let $\g_{\ba, Q_0}$ be a pair of reflectively related closed billiard trajectories in $Q_0$ having the billiard sequence $\ba$. Then $\pi_{Q_0}^{-1}(\g_{\ba, Q_0})$ is a collection of closed billiard trajectories $\{\g_1,\cdots, \g_n\}$ in $P_0$. Due to the rotational symmetry of $P_0$ it follows that the family $\{\g_1,\cdots, \g_n\}$ are all cyclically related, and moreover, are the \textit{all} the cyclically related billiard trajectories having some billiard sequence $\bb$. It follows that for any $Q$ a pair of reflectively related closed billiard trajectories, $\g_{\ba, Q} = \{\g, \bar{\g}\}$, lift to the family of closed billiard trajectories $\g_{\bb, P_Q}$ having some billiard sequence $\bb$. If $\g_{\ba, Q} = \{\g, \bar{\g}\}$, we define the average length function as
\begin{align}
    L_{\text{avg}}(\ba, Q) = \frac{1}{2}\l(\ell(\g) + \ell(\bar{\g})\r)
\end{align}
The total length of the closed billiard trajectories in $\g_{\bb, P_Q} = \{\g_1,\cdots, \g_n\}$ will just be $2k(\ell(\g) + \ell(\bar{\g}))$. Thus it follows that $n L_{\text{avg}}(\bb, P_Q) = 4k L_{\text{avg}}(\ba, Q)$. Since, from theorem \ref{thm:main}, it follows that $L_{\text{avg}}(\bb, P)$ is uniquely minimized at $P_0 = P_{Q_0}$, and thus $L_{\text{avg}}(\ba, Q)$ is uniquely minimized at $Q_0$. This completes the proof of Theorem~\ref{thm:lambert}.
% bibliography
\bibliographystyle{alpha}
\bibliography{Billiards}
\end{document}